\documentclass[UTF8,11pt]{ctexart}
\usepackage[a4paper,margin=2.8cm]{geometry}
\usepackage{amsmath,amssymb,amsthm,mathtools,mathrsfs,bm}
\usepackage{iftex}
\ifPDFTeX
  \usepackage[T1]{fontenc}
\fi
\IfFileExists{newtxtext.sty}{\usepackage{newtxtext}}{}
\IfFileExists{newtxmath.sty}{\usepackage{newtxmath}}{}
\usepackage{microtype}
\usepackage{enumitem}
\usepackage{setspace}
\usepackage{hyperref}
\usepackage[nameinlink,noabbrev]{cleveref}
\usepackage{aliascnt}
\usepackage{fancyhdr}
\usepackage{lastpage}
\hypersetup{colorlinks=true,linkcolor=blue,citecolor=blue,urlcolor=blue}

\numberwithin{equation}{section}
\allowdisplaybreaks

\theoremstyle{plain}
\newtheorem{theorem}{Theorem}[section]

\newaliascnt{lemma}{theorem}
\newtheorem{lemma}[lemma]{Lemma}
\aliascntresetthe{lemma}

\newaliascnt{proposition}{theorem}
\newtheorem{proposition}[proposition]{Proposition}
\aliascntresetthe{proposition}

\newaliascnt{corollary}{theorem}
\newtheorem{corollary}[corollary]{Corollary}
\aliascntresetthe{corollary}

\theoremstyle{definition}
\newaliascnt{definition}{theorem}
\newtheorem{definition}[definition]{Definition}
\aliascntresetthe{definition}

\newaliascnt{example}{theorem}
\newtheorem{example}[example]{Example}
\aliascntresetthe{example}

\theoremstyle{remark}
\newaliascnt{remark}{theorem}
\newtheorem{remark}[remark]{Remark}
\aliascntresetthe{remark}

\theoremstyle{definition}
\newaliascnt{question}{theorem}
\newtheorem{question}[question]{Question}
\aliascntresetthe{question}

\Crefname{question}{question}{questions}
\Crefname{question}{Question}{Questions}

\crefname{theorem}{theorem}{theorems}
\Crefname{theorem}{Theorem}{Theorems}
\crefname{lemma}{lemma}{lemmas}
\Crefname{lemma}{Lemma}{Lemmas}
\crefname{proposition}{proposition}{propositions}
\Crefname{proposition}{Proposition}{Propositions}
\crefname{corollary}{corollary}{corollaries}
\Crefname{corollary}{Corollary}{Corollaries}
\crefname{definition}{definition}{definitions}
\Crefname{definition}{Definition}{Definitions}
\crefname{example}{example}{examples}
\Crefname{example}{Example}{Examples}
\crefname{remark}{remark}{remarks}
\Crefname{remark}{Remark}{Remarks}

\newcommand{\R}{\mathbb{R}}
\newcommand{\Z}{\mathbb{Z}}

\newcommand{\supp}{\operatorname{supp}}

\newcommand{\papertitle}{Common Spectral Eigenvalue Spectra for Random Convolutions \\Generated by Hadamard Triples}
\title{\papertitle}
\date{}

\newcommand{\paperauthors}{Wei-Jie Wang$^{1}$\quad Xin Yang$^{2,*}$}

\newcommand{\paperkeywords}{Spectral measure; Hadamard triples; Spectral eigenvalue; Beurling dimension.}

\newcommand{\paperaddressone}{Hubei Key Laboratory of Mathematical Sciences, College of Mathematics and Statistics, Central China Normal University, Wuhan, Hubei 430079, China}
\newcommand{\paperemailone}{wwjmath@163.com}
\newcommand{\paperaddresstwo}{School of Mathematics and Statistics, Huazhong University of Science and Technology, Wuhan 430074, China}
\newcommand{\paperemailtwo}{xyang@hust.edu.cn}

\begin{document}

\begin{center}
{\Large\bfseries \begin{tabular}{@{}c@{}}\papertitle\end{tabular}\par}
\vspace{0.9em}
{\large \paperauthors\par}
\end{center}
\vspace{0.7em}

\begin{abstract}
In a previous work by Lu \cite{Luzheng01}, it was proved that the set 
$$\mathcal{T}:=\{t\in\mathbb{Z}\setminus\{0\}:(q,\mathcal{D},tL)\text{ forms a Hadamard triple}\}$$
constitutes a spectral eigenvalue set for $\mu_{q,D}$, where $(q,\mathcal{D},L)$ is a Hadamard triple. And they prove for $s \in [0,\frac{\log \#\mathcal{D}}{\log q}]$, the corresponding spectra form a family of cardinality continuum. In this paper, we study Moran measures formed by random convolutions of finite Hadamard triples. Let 
$$\mu=\delta_{M_1^{-1}D_1}*\delta_{M_2^{-1}D_2}*\cdots,
 \qquad M_k=q_1q_2\cdots q_k,$$
where the factors are produced from finitely many triples
$\{(N_j,B_j,L_j):1\le j\le m\}$,
$(\omega_k)_{k=1}^{\infty}\in\{1,2,\ldots,m\}^{\mathbb N}$,
and $n_k\in\mathbb N^+$, by setting
$$
q_k=N_{\omega_k}^{n_k},\qquad D_k=B_{\omega_k},\qquad
E_k=N_{\omega_k}^{n_k-1}L_{\omega_k}.
$$
Assume a non-full-digit gap
$$\rho:=\min_{1\le j\le m}\frac{N_j}{\#B_j}>1.$$
For the common Hadamard triple multiplier set
$$\mathcal{T}_*:=\bigcap_{j=1}^m\{t\in\mathbb{Z}\setminus\{0\}:(N_j,B_j,tL_j)\text{ is a Hadamard triple}\},$$
Our main result is that, for every
$$0\le s\le \kappa_\omega:=\limsup_{R\to\infty}\frac{\sum_{r=1}^R\log \#D_r}{\sum_{r=1}^R\log q_r},$$
there exist continuum many countable sets $\Lambda\subset\mathbb{Z}$ such that $t\Lambda$ is a spectrum of $\mu$ for every $t\in\mathcal{T}_*$ and $\dim_{Be}\Lambda=s$.

\end{abstract}

\begingroup
\renewcommand{\thefootnote}{}
\footnotetext{%
\textit{Mathematics Subject Classification (2020).} Primary 42A38; Secondary 42C15, 28A33.\par
\textit{Key words and phrases.} \paperkeywords\par
Corresponding author $*$.}
\endgroup

\section{Introduction}
A Borel probability measure $\mu$ on $\mathbb{R}^d$ is called a spectral measure if there exists a countable set $\Lambda \subset \mathbb{R}^d$, called a spectrum for $\mu$, such that the family of exponential functions $E(\Lambda):=\{e^{2\pi i \lambda x}:\lambda \in \Lambda\}$ forms an orthonormal basis for $L^2(\mu)$. If a spectral measure $\mu$ is the Lebesgue measure on a measurable set $\Omega$, then we say that $\Omega$ is a spectral set.

Whether a spectrum exists for a given measure $\mu$ is a fundamental question in harmonic analysis. Much of its significant interest stems from Fuglede's influential 1974 conjecture \cite{Fuglede01}:
$$\Omega \text{ is a spectral set} \Longleftrightarrow  \Omega \text{ tiles } \mathbb{R}^d \text{ by translations}.$$
The conjecture has remained open until 2004, at which point it was refuted in full generality for dimensions three and higher \cite{KM01,KM02,Tao}. Nonetheless, it continues to be open in a number of significant cases. For further progress in this direction, see \cite{L\'aba01,L\'aba02,Lagarias01,Lev01}. 

In this paper, for finite set $\mathcal{D}$, let us define the equally-weighted Dirac measure as
$$\delta_{\mathcal{D}}=\frac{1}{\#\mathcal{D}} \sum_{d\in\mathcal{D}} \delta_{d}.$$
The spectrality of the Dirac measure is closely related to the following concepts:
\begin{definition}
     Let $N\ge2$ be an integer, and let $\mathcal{D},L\subset\mathbb{Z}$ be finite sets with $\#\mathcal{D}=\#L$. We say that $(N,\mathcal{D},L)$ is a Hadamard triple if
$$\frac{1}{\sqrt{\#\mathcal{D}}}\left(e^{-2\pi i d\ell/N}\right)_{d\in \mathcal{D},\ell\in L}$$
is unitary. Equivalently, $L$ is a spectrum for the discrete measure $\delta_{N^{-1}\mathcal{D}}$.
\end{definition}
Purely atomic measures are not sufficient for a comprehensive understanding of all spectral measures. In 1998, Jorgensen and Pedersen \cite{JS01} discovered the first singular non-atomic spectral measure-the middle fourth Cantor measure. And surprisingly, for different spectra, the convergence results corresponding to their Fourier series are different, see \cite{Dutkay01,Strichartz01,Strichartz02}. Thus, attention turned to the problem of spectral eigenvalues- $t \in \mathbb{R}$ for which both $\Lambda$ and $t\Lambda$ are spectra of $\mu$. There has already been a large amount of research \cite{ADH02,Dai01,Fuhe01,FHW03,hetang01,LW03}. A set $\mathcal{A}\subset\mathbb{R}\setminus\{0\}$ is called a \textsl{spectral eigenvalue set} for $\mu$ if $\mu$ admits a spectrum $\Lambda$ satisfying that $a\Lambda$ is a spectrum of $\mu$ for every $a\in\mathcal{A}$. 
The notion of Beurling dimension is used to quantify the size of a discrete set. We recall its definition below:
$$\dim_{Be}(\Lambda)=\limsup_{h\rightarrow \infty} \sup_{x \in \mathbb{R}} \frac{\log \#(\Lambda \cap (x-\frac{h}{2},x+\frac{h}{2}))}{\log h}.$$
Shi \cite{Shi01} proved that, for an arbitrary spectral measure $\mu$, the Beurling dimension of any spectrum of $\mu$ is bounded above by the upper entropy dimension $\overline{\dim}_e\mu$. An and Lai \cite{AL04} constructed measures whose spectra have Beurling dimension zero. This naturally leads to the interpolation problem:

\begin{question}
    For $0 \leq s \leq \overline{\dim}_e \mu$, does there exist a spectrum $\Lambda$ of $\mu$ with $\dim_{Be}(\Lambda)=s?$
\end{question}

In 2019, Dutkay, Haussermann and Lai \cite{Dutkay02} proved that if $(N,\mathcal{D},L)$ forms a Hadamard triple, then the
associated self-affine measure $\mu_{N,\mathcal{D}}$ is a spectral measure. Subsequently, under the condition that $(N,\mathcal{D},L)$ is a Hadamard triple, Lu \cite{Luzheng01} proved that the set 
$$\mathcal{T}:=\{t\in\mathbb{Z}\setminus\{0\}:(N,\mathcal{D},tL)\text{ forms a Hadamard triple}\}$$
constitutes a spectral eigenvalue set for $\mu_{N,\mathcal{D}}$, and for any prescribed Beurling dimension $s \in [0,\frac{\log \#\mathcal{D}}{\log N}]$, there are continuum many corresponding spectra.

These developments naturally motivate the study of Moran measures generated by random convolutions of finite Hadamard triples. Let $(N_j,B_j,L_j)$ be finitely many one-dimensional Hadamard triples with $N_j\ge2$ for $1\le j\le m$.  Fix
$$\omega=(\omega_k)_{k\ge1}\in\Omega_m:=\{1,\dots,m\}^{\mathbb{N}},
 \qquad n_k\in\mathbb{N}^+.$$
Set
$$q_k=N_{\omega_k}^{n_k},\qquad
 D_k=B_{\omega_k},\qquad
 E_k=N_{\omega_k}^{n_k-1}L_{\omega_k},
 \qquad M_k=q_1q_2\cdots q_k.$$
Then $(q_k,D_k,E_k)$ is a Hadamard triple for every $k$.  We consider the random infinite convolution
\begin{equation}\label{eq:1.1}
 \mu=\delta_{M_1^{-1}D_1}*\delta_{M_2^{-1}D_2}*\cdots *\delta_{M_k^{-1}D_k}*\cdots .
\end{equation}
In \cite{LIMIAO01}, they proved that these measures are spectral under a suitable condition.

\begin{theorem}
    Let $(N_j,B_j,L_j)$ be finitely many Hadamard triples and $\gcd(B_j-B_j)=1$ for $1 \leq j \leq m$.  
    Let $\mu$ be given by \eqref{eq:1.1}, then $\mu$ is a spectral measure for every $\omega \in\Omega_m $.
\end{theorem}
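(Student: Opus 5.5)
The plan is to follow the standard route for infinite convolutions of Hadamard triples: build a candidate spectrum, verify orthogonality directly from the Hadamard property, and then prove completeness by a limiting argument in which $\gcd(B_j-B_j)=1$ supplies uniform control. Write $\mu=\mu_k*\mu_{>k}$ with $\mu_k=\delta_{M_1^{-1}D_1}*\cdots*\delta_{M_k^{-1}D_k}$, and let $\nu_{>k}$ denote $\mu_{>k}$ rescaled by $M_k$. Then $\nu_{>k}$ is again an infinite convolution of the same type, driven by the shifted sequence $\sigma^k\omega$, and
\[
\hat\mu(\xi)=\hat\mu_k(\xi)\,\hat\nu_{>k}(M_k^{-1}\xi).
\]
Since each $(q_k,D_k,E_k)$ is a Hadamard triple, we may replace $E_k$ by a translate modulo $q_k$ containing $0$. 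The candidate spectrum is then
\[
\Lambda=\bigcup_k \Lambda_k,\qquad \Lambda_k=E_1+q_1E_2+\cdots+q_1\cdots q_{k-1}E_k .
\]
The Hadamard property of each finite stage gives that $\Lambda_k$ is a spectrum of $\mu_k$. Because $\Lambda_k\subset\Lambda_{k+1}$, the exponentials $E(\Lambda)$ are mutually orthogonal in $L^2(\mu)$, and orthogonality is routine.

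For completeness, the key is the Jorgensen–Pedersen criterion $Q(\xi)=\sum_{\lambda\in\Lambda}|\hat\mu(\xi+\lambda)|^2\equiv1$. Using the spectrality of $\mu_k$, we expand
\[
Q(\xi)=\lim_k \sum_{\lambda\in\Lambda_k}|\hat\mu_k(\xi+\lambda)|^2\,|\hat\nu_{>k}(M_k^{-1}(\xi+\lambda))|^2 ,
\]
where $\sum_{\lambda\in\Lambda_k}|\hat\mu_k(\xi+\lambda)|^2=1$. It therefore suffices to bound $|\hat\nu_{>k}|$ uniformly from below on the bounded set $\{M_k^{-1}(\xi+\lambda):\lambda\in\Lambda_k\}$. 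This set lies in a fixed compact interval once the digit sets $E_k$ are chosen inside a fundamental domain such as $(-q_k/2,q_k/2]$ (possibly after a symmetric adjustment).

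The main obstacle is that the rescaled tails $\nu_{>k}$ vary with $k$, because $\sigma^k\omega$ changes and the exponents $n_k$ are arbitrary. I would handle this with an equi-positivity argument. First, the family $\{\nu_\eta\}$ over all admissible sequences (shift and exponent choices) is tight, and its weak limits are again measures of the same type. Second, I would show that no such limit has an integral periodic zero set. A zero of $\hat\nu$ would force some factor $m_{B_j}(N_j^{-n}x)$ to vanish at an $x$ that repeats along the orbit. The condition $\gcd(B_j-B_j)=1$ rules this out, since then the zeros of each mask $m_{B_j}$ avoid $\mathbb{Z}\setminus\{0\}$ in the appropriate sense, which yields a uniform lower bound. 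Only finitely many triples occur, so only finitely many masks need to be checked. Arbitrarily large exponents $n_k$ are harmless, because they merely contract the argument toward $0$, where every $|m_{B_j}|$ is close to $1$.

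If this uniform lower bound fails for the fixed $\Lambda$, the fallback is the Dutkay–Haussermann–Lai strategy. One chooses the digits $E_k$ adaptively (using shifts in $q_k\mathbb{Z}$) so that the points $M_k^{-1}\lambda$ stay away from the zero sets of the limiting tails. Equi-positivity on a neighbourhood of $[0,1]$ then gives $Q\equiv1$ by dominated convergence.
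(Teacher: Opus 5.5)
\textbf{There is a genuine gap.} Your main line fails at the claimed uniform lower bound. The paper does not prove this statement (it quotes it from Li--Miao--Wang), so I judge your argument on its own.

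Take $m=1$, $(N_1,B_1,L_1)=(2,\{0,1\},\{0,1\})$ and $n_k\equiv 1$. The hypotheses hold, $\mu$ is Lebesgue measure on $[0,1]$, and every rescaled tail $\nu_{>k}$ equals $\mu$. Its transform $\widehat{\mu}(\xi)=(1-e^{-2\pi i\xi})/(2\pi i\xi)$ vanishes at $\pm1$. Digits in $(-1,1]$ give $E_k=\{0,1\}$ and $\Lambda=\mathbb{N}_0$; the other convention gives $-\mathbb{N}_0$. Neither is a spectrum, since $e^{-2\pi i x}$ is orthogonal to $e^{2\pi i n x}$ for every $n\ge 0$. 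Correspondingly $M_k^{-1}(2^k-1)=1-2^{-k}$ tends to the zero $1$ of $\widehat{\nu}_{>k}=\widehat{\mu}$.

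So the inference ``no integral periodic zero set, hence a uniform lower bound on a fixed interval'' is false. Absence of an integral periodic zero set only gives, for each $x$, \emph{some} integer translate $x+k_x$ at which the transform is nonzero.

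Your account of the gcd hypothesis is also not the right mechanism. For every integer digit set $B$ and every $k\in\mathbb{Z}$ one has $m_B(k)=1$. Hence ``the zeros of $m_{B_j}$ avoid $\mathbb{Z}\setminus\{0\}$'' holds with no hypothesis at all. The same reasoning would then exclude integral periodic zero sets for the Hadamard triple $(2,\{0,3\},\{0,1\})$, where $\widehat{\mu}(\xi)=(1-e^{-6\pi i\xi})/(6\pi i\xi)$ vanishes on all of $\tfrac13+\mathbb{Z}$. What $\gcd(B_j-B_j)=1$ actually says is that $|m_{B_j}(x)|=1$ only for $x\in\mathbb{Z}$. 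This has to be fed into a cycle-type argument showing that no $\widehat{\mu_\eta}$, $\eta\in\Omega_m$, vanishes identically on a coset $x+\mathbb{Z}$. That argument is the heart of the theorem, and it is absent from your proposal.

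Your fallback is the right route. It is the strategy behind the cited result, and the one this paper implements for Theorem~\ref{thm:1.3} via Proposition~\ref{prop:3.2}, Corollary~\ref{cor:3.3}, the carries $k(\cdot,n_r)$ of Section~4, and Theorem~\ref{thm:2.5}. As written, though, it is one sentence, and each ingredient still needs proof:
\begin{enumerate}
\item[(i)] Equi-positivity. You need $\varepsilon,\delta>0$ and a bounded set of integers such that, for every $\eta\in\Omega_m$ and $x\in[0,1]$, some $k$ in that set gives $|\widehat{\mu_\eta}(x+k+y)|\ge\varepsilon$ for $|y|<\delta$. This follows from the no-coset-of-zeros property by compactness of $\Omega_m\times[0,1]$ together with Lemmas~\ref{lem:2.3} and~\ref{lem:2.4}, as in the second half of the proof of Proposition~\ref{prop:3.2}.
\item[(ii)] Transfer to tails with arbitrary exponents $n_k$. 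This is cleanest through the domination $|\widehat{\nu}_{>k}|\ge|\widehat{\mu_{\sigma^{s_k}\eta}}|$ in \eqref{eq:3.7}. Your ``weak limits are of the same type'' needs qualification, since they can degenerate to finite convolutions or $\delta_0$ when exponents blow up.
\item[(iii)] The construction itself. Each $\lambda$ is shifted by $M_{\alpha_n}k_\lambda$ along a sparse sequence of levels $\alpha_n$. These levels are chosen so that $|M_{\alpha_{n+1}}^{-1}(\lambda+\xi)|<\delta$ for all previously built $\lambda$ and small $|\xi|$. The block triples of Lemma~\ref{lem:2.2} and the residue invariance of Lemma~\ref{lem:2.1}(2) keep each $\Lambda_{\alpha_n}$ a spectrum of $\mu_{\alpha_n}$.
\end{enumerate}
The conclusion then comes from Theorem~\ref{thm:2.5}, not from a uniform bound on a fixed digit-expansion set.
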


The upper entropy dimension of such a measure is given by:
\begin{equation}\label{eq:1.2}
 \kappa_\omega=\limsup_{R\to\infty}\frac{\sum_{r=1}^R\log \#D_R}{\sum_{r=1}^R\log q_R}.
\end{equation}
 Define the common Hadamard triple multiplier set
\begin{equation}\label{eq:1.3}
 \mathcal{T}_*:=\bigcap_{j=1}^m\mathcal{T}_j,
 \qquad
 \mathcal{T}_j:=\{t\in\Z\setminus\{0\}:(N_j,B_j,tL_j)\text{ is a Hadamard triple}\}.
\end{equation}
The following are the main results of this paper.
\begin{theorem}\label{thm:1.3}
Let $(N_j,B_j,L_j)$ be finitely many Hadamard triples for all $1\le j\le m$. Assume the non-full-digit gap
\begin{equation}\label{eq:1.5}
 \rho:=\min_{1\le j\le m}\frac{N_j}{\#B_j}>1.
\end{equation}
Let $\mu$ be the random convolution in \eqref{eq:1.1}, and  $\kappa_\omega$ as in \eqref{eq:1.2}.  Then for every $s\in[0,\kappa_\omega]$ there are continuum many countable sets $\Lambda\subset\mathbb{Z}$ such that
$$
 t\Lambda\text{ is a spectrum of }\mu\quad\text{for every }t\in\mathcal{T}_*,
$$
and
$$
 \dim_{Be}\Lambda=s.
$$
Consequently, $\mathcal{T}_*$ is a spectral eigenvalue set for $\mu$. Let
$$
V_s(\mu,\mathcal T_*):=\{\Lambda\subset\mathbb Z: t\Lambda\text{ is a spectrum of }\mu\text{ for every }t\in\mathcal T_*,\ \dim_{Be}\Lambda=s\},
$$
then $\#V_s(\mu,\mathcal{T}_*)=2^{\aleph_0}$.
\end{theorem}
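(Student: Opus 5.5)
We would follow the standard tree/digit-expansion construction of spectra for infinite convolutions (as in Dutkay--Haussermann--Lai and Lu), adapted to the random sequence of triples $(q_k,D_k,E_k)$. Each $E_k$ is a complete residue-type set for the Hadamard structure. For $t\in\mathcal T_*$ the triple $(N_j,B_j,tL_j)$ is Hadamard for all $j$, and hence so is $(q_k,D_k,tE_k)$ with $tE_k=N_{\omega_k}^{n_k-1}tL_{\omega_k}$. We therefore look for $\Lambda\subset\mathbb Z$ of the form
\[
\Lambda=\Bigl\{\sum_{k=1}^{\ell} M_{k-1}\,e_k:\ \ell\ge 0,\ e_k\in E_k\ (\text{or a modified digit set } E_k'),\ e_k\text{ eventually in a chosen ``zero'' digit}\Bigr\},
\]
with $M_0=1$. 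We arrange, via the standing hypotheses, that $0\in E_k$, so finite expansions make sense. Scaling by $t$ gives the same shape of set with digits from $tE_k$. Mutual orthogonality of $t\Lambda$ is then immediate. For two distinct elements, let $k$ be the first index where their digits differ. The difference is $M_{k-1}t(e_k-e_k')+M_k(\cdots)$. Since the mask $\widehat{\delta_{q_k^{-1}D_k}}$ vanishes at $t(e_k-e_k')/q_k$ and $\widehat\mu(\xi)=\prod_j\widehat{\delta_{M_j^{-1}D_j}}(\xi)$, the factor $j=k$ kills $\widehat\mu$ at that difference. This uses that $D_k\subset\mathbb Z$, so the factors $j<k$ only see integer multiples of $M_{k-1}$ modulo the relevant period.

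Completeness is the main obstacle, and we would attack it first. The plan is to use the Jorgensen--Pedersen criterion $Q(\xi):=\sum_{\lambda\in t\Lambda}|\widehat\mu(\xi+\lambda)|^2\equiv1$. We reduce it to the finite-level identities $\sum_{\lambda\in\Lambda_n}|\widehat{\mu_n}(\xi+\lambda)|^2=1$, where $\mu_n$ is the $n$-fold convolution, together with a uniform lower bound $|\widehat{\mu_{>n}}(M_n^{-1}(\xi+\lambda))|\ge c>0$ on the tails. This is the usual ``no non-trivial invariant cycles'' step. Since we do not assume $\gcd(B_j-B_j)=1$, we cannot simply quote the spectrality theorem of \cite{LIMIAO01}. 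Instead the freedom in choosing digits will be used: at each level $k$ we replace $tE_k$ by $tE_k+q_k\mathbb Z$-translates chosen so that the points $M_n^{-1}(\xi+\lambda)$ stay in a fixed compact set where the tail transforms are bounded below.

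Here the gap $\rho>1$ enters. Because $\#D_k<q_k$, we have room in the residue classes modulo $q_k$. We would use this extra room to insert long blocks of zero digits, either sparse or dense. These blocks serve two purposes. First, they keep the normalized tails bounded away from the zeros of $\widehat\mu$, uniformly over the finitely many triples. Second, they tune the Beurling dimension.

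For the dimension, the digit at level $k$ is allowed to be nonzero only for $k$ in a chosen set $S\subset\mathbb N$; at all other levels the digit is $0$, and the completeness argument is routed through the fact that choosing digits in $E_k+q_k\mathbb Z$ is still admissible. Counting points of $\Lambda$ in intervals of length $M_R$ then gives roughly $\prod_{k\le R,k\in S}\#D_k$ points, so
\[
\dim_{Be}\Lambda=\limsup_{R\to\infty}\frac{\sum_{k\le R,k\in S}\log\#D_k}{\sum_{k\le R}\log q_k}.
\]
The upper bound comes from covering an arbitrary interval of length $h$ by the level $R$ with $M_{R-1}\le h<M_R$. The lower bound comes from the block $[0,M_R)$.

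Choosing $S$ of suitable upper density, weighted by $\log\#D_k/\log q_k$, realizes every $s\in[0,\kappa_\omega]$, using $\sup_k\log q_k$ only through ratios. Continuum many distinct sets are then obtained by perturbing $S$ along a sparse subsequence, indexed by $\{0,1\}^{\mathbb N}$; such perturbations do not affect the limsup. Alternatively, one can vary the digit translates by $q_k\mathbb Z$ at sparse levels. Distinct choices yield distinct $\Lambda$, which gives $\#V_s=2^{\aleph_0}$.
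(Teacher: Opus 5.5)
Your orthogonality argument is fine. Your reduction of completeness to ``finite-level spectra plus a uniform lower bound on the tails'' also matches the paper (Step~3 together with Theorem~2.5). The gap is that the step you call the main obstacle is never supplied, and the mechanism you sketch for it does not work.

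Once $\lambda\in\Lambda_n$ is fixed modulo $M_n$, the only freedom in the normalized point $M_n^{-1}t\lambda$ is a shift by $t\mathbb Z$, coming from $\lambda\mapsto\lambda+M_nk$. As $\lambda$ runs through $\Lambda_n$, these points are spread throughout $[0,t)$ modulo $t$. Zero blocks only help the few $\lambda$ whose recent digits vanish, for which $M_n^{-1}t\lambda$ is near $0$. No fixed compact region contains the remaining points while avoiding the zeros of the tails, so you must dodge zeros point by point.

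What is needed is a uniform selection statement (Proposition~3.2, Corollary~3.3): constants $\varepsilon_t,\delta_t,K_t$ such that for every tail $\widehat{\nu}_{>n}$ and every $x\in[0,t]$ some $k\in\{0,\dots,K_t\}$ gives $|\widehat{\nu}_{>n}(x+tk+y)|\ge\varepsilon_t$ for $|y|<\delta_t$. This is exactly where $\rho>1$ enters, rather than through ``room in the residue classes''.
\begin{itemize}
\item Since $(Q_u,B_u,tE_u)$ is Hadamard, $\sum_{k<Q_u}|\widehat{\delta_{Q_u^{-1}B_u}}(x+tk)|^2=Q_u/\#B_u\ge\rho^r$. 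So more than $\#Z_t+2$ shifts have non-vanishing mask.
\item The factor $\widehat{\mu_{\sigma^r\eta}}((x+tk)/Q_u)$, with $(x+tk)/Q_u\in[0,t]$, vanishes for at most $\#Z_t$ of these shifts. This uses the Li--Miao--Wang finite-zero lemma, which is uniform over $\eta\in\Omega_m$.
\item Compactness of $\Omega_m\times[0,t]$ and equicontinuity then make the constants uniform.
\end{itemize}
The full-digit case shows that $\rho>1$ must be used in this quantitative way. For Lebesgue measure on $[0,1]$ ($N=2$, $B=L=\{0,1\}$) and $t=3\in\mathcal T_*$, every point of $1+3\mathbb Z$ is a zero of $\widehat\mu$, so no shift works.

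You also let the translates depend on $t$, whereas a single $\Lambda$ must serve all infinitely many $t\in\mathcal T_*$. The paper handles this with a traversal sequence in which every $t$ recurs infinitely often. At stage $r$ it tunes the carries only for $a_r$, and it uses that Theorem~2.5 needs the tail bound only along a subsequence of levels.

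Your dimension mechanism is incompatible with spectrality. If the digit is forced to be $0$ at every level $k\notin S$, the finite sections have only $\prod_{k\le n,\,k\in S}\#D_k$ points. They therefore cannot be spectra of $\mu_n$, as your own completeness reduction requires: every digit must be used at every level.

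The paper keeps all digits, but only a subset $C_r\subset E_{n_{r-1},n_r}$ with $\#C_r\approx Q_{n_{r-1},n_r}^{s}$ continues with bounded carries. This is the principal part $\overline\Lambda$, with $\#\overline\Lambda_{n_r}=\prod_{j\le r}\#C_j$. Every remaining digit receives a huge carry, so these points form a lacunary set ($|\sigma_{N+1}|>2|\sigma_N|+1$) of Beurling dimension $0$. That such huge carries can be chosen without losing the tail bound is a separate argument (Lemma~3.5): take $r=a+Q_\ell h$, use the $Q_\ell$-periodicity of the finite block, and apply Corollary~3.3 again at depth $k+\ell$.

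For the same reason, ``perturbing $S$'' does not produce continuum many spectra. Your alternative of varying the translates is the paper's route, but it needs two things you do not supply:
\begin{itemize}
\item two admissible carries satisfying the tail bound at each stage, which is why Proposition~3.2 produces two distinct $k_1,k_2$;
\item a forbidden-set argument ensuring that different choice sequences give different sets.
\end{itemize}
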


\begin{remark}
If all Hadamard triples are identical and $n_k\equiv1$, then Theorem \ref{thm:1.3} reduces to Lu's result for the self-similar measure. The set $\mathcal{T}_*$ contains all nonzero integers that are coprime to $N_1N_2\cdots N_m$, i.e.,
$$
 \{p\in\mathbb{Z}\setminus\{0\}:\gcd(p,N_1N_2\cdots N_m)=1\} \subset \mathcal{T}_*.
$$
\end{remark}

To clarify the difference between these two conditions, we give the following example.

\begin{example}
    For each \(j\in\{1,\ldots,m\}\), let \(N_j=4\), \(B_j=\{0,2\}\) and \(L_j=\{0,1\}\). It is easy to see that each $(N_j,B_j,L_j)$ is a  Hadamard triple. Moreover, $\gcd(B_j-B_j)=2$ and $\rho>1$.  In addition, $\mu_{4,\{0,2\}}$ is a spectral measure.
\end{example}

We organise our paper as follows: in section 2, we introduce some necessary preliminaries and notations, and present some lemmas that will be used in the proofs that follow. In Section 3, by establishing the Hadamard triple identity, we obtain an estimate of the tail term of the Fourier transform of the measure, preparing for the subsequent construction of the spectrum. In the last section, we construct a set satisfying the spectral property and prove Theorem \ref{thm:1.3}.

\section{Preliminaries}
In this section, we present some necessary definitions and fundamental facts that will be used in our research. 

The Fourier transform of a Borel probability measure $\mu$ is defined to be 
$$\widehat{\mu}(\xi)= \int e^{-2\pi i x \cdot \xi} d\mu(x).$$
We denote the convolution of the first $n$ terms
$$\mu_n=\delta_{M_1^{-1}D_1}*\cdots *\delta_{M_n^{-1}D_n}$$
and the tail term by
\[
\mu_{>n}
=
\delta_{M_{n+1}^{-1}D_{n+1}}
*
\delta_{M_{n+2}^{-1}D_{n+2}}
*
\cdots .
\]
According to \eqref{eq:1.1}, we have $$\mu=\mu_n*\mu_{>n}.$$ 
The scaled tail is denoted by
\begin{equation}\label{eq:2.1}
 \nu_{>n}(A)=\mu_{>n}(M_n^{-1}A),
 \qquad \text{ for Borel set } A\subset\mathbb{R}.
\end{equation}
Thus 
\begin{equation}\label{eq:2.2}
 \widehat{\mu}_{>n}(\xi)=\widehat{\nu}_{>n}(M_n^{-1}\xi).
\end{equation}
For $0\le a<b$, define the scale, the spatial digit block, and the frequency digit block by
$$
 Q_{a,b}=q_{a+1}q_{a+2}\cdots q_b,
$$
$$
 D_{a,b}=q_{a+2}\cdots q_bD_{a+1}+q_{a+3}\cdots q_bD_{a+2}+\cdots +q_bD_{b-1}+D_b,
$$
and
$$
 E_{a,b}=E_{a+1}+q_{a+1}E_{a+2}+\cdots +q_{a+1}q_{a+2}\cdots q_{b-1}E_b.
$$
Here and below, sums of sets are Minkowski sums, scalar multiplication of sets is understood elementwise, and empty products are taken to be $1$. We give some properties of Hadamard triples and omit its proof. 

\begin{lemma}[Basic operations for Hadamard triples]\label{lem:2.1}
Let $(N,B,L)$ be a Hadamard triple with $N\ge2$.
\begin{enumerate}[label=(\arabic*)]
\item $\delta_{N^{-1}B}$ is a spectral measure with spectrum $L$.

\item $(N,B,\widetilde L)$ is also a Hadamard triple for $\widetilde{L}=L\pmod{N}$.

\item For every $r\in\mathbb N^+$,
\[
 (N^r,B,N^{r-1}L)
\]
is a Hadamard triple. Indeed,
\[
 e^{-2\pi i b(N^{r-1}\ell)/N^r}=e^{-2\pi i b\ell/N}.
\]

\item More generally, if $(Q_i,D_i,E_i)$, $1\le i\le r$, are Hadamard triples, then
\[
\left(
\prod_{i=1}^r Q_i,
\sum_{i=1}^r\left(\prod_{\ell=i+1}^r Q_\ell\right)D_i,
\sum_{i=1}^r\left(\prod_{\ell=1}^{i-1}Q_\ell\right)E_i
\right)
\]
is a Hadamard triple.
\end{enumerate}
\end{lemma}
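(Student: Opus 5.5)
The plan is to verify the four items in order, using the matrix characterization: $(N,B,L)$ is a Hadamard triple if and only if $\#B=\#L$ and, for all distinct $\ell,\ell'\in L$,
\[
\sum_{b\in B}e^{-2\pi i b(\ell-\ell')/N}=0,
\]
that is, the columns of the normalized matrix are orthonormal. A square matrix with orthonormal columns is unitary, so this orthogonality is all that has to be checked.

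For item (1), note that $\widehat{\delta_{N^{-1}B}}(\xi)=\frac{1}{\#B}\sum_{b}e^{-2\pi i b\xi/N}$. The orthogonality relations then say exactly that $\{e^{2\pi i\ell x}\}_{\ell\in L}$ is orthonormal in $L^2(\delta_{N^{-1}B})$. Since $L^2(\delta_{N^{-1}B})$ has dimension $\#B=\#L$, this orthonormal family is a basis. Item (2) holds because the entries $e^{-2\pi i b\ell/N}$ depend only on $\ell \bmod N$ (here $b\in\mathbb Z$). Reducing $L$ modulo $N$ therefore leaves the matrix unchanged, and the reduced elements stay distinct, since two equal residues would produce two equal columns, which cannot happen in a unitary matrix. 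Item (3) is the displayed identity: the matrix for $(N^r,B,N^{r-1}L)$ coincides entrywise with that of $(N,B,L)$.

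The only substantive item is (4). I would first prove it for $r=2$ and then finish by induction, grouping $(Q_1,\dots,Q_{r-1})$ into one triple. The grouping is consistent because $\sum_{i\le r}(\prod_{\ell>i}Q_\ell)D_i=Q_r\cdot(\text{block of first }r-1)+D_r$, and $E$ splits in the same way.

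For $r=2$, set $D=Q_2D_1+D_2$ and $E=E_1+Q_1E_2$. Take $d=Q_2d_1+d_2$ and $e=e_1+Q_1e_2$. Then
\[
\frac{de}{Q_1Q_2}\equiv \frac{d_1e_1}{Q_1}+\frac{d_2e_2}{Q_2}+\frac{d_2e_1}{Q_1Q_2}\pmod 1,
\]
because $d_1e_2\in\mathbb Z$. The entries therefore factor as $A_1(d_1,e_1)\,A_2(d_2,e_2)\,\phi(d_2,e_1)$, where $\phi$ is a unimodular twist.

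The step needing care is orthogonality of two columns $e\neq e'$. The column inner product is
\[
\sum_{d_2}\overline{\phi(d_2,e_1)}\phi(d_2,e_1')\,\overline{A_2(d_2,e_2)}A_2(d_2,e_2')\,\sum_{d_1}\overline{A_1(d_1,e_1)}A_1(d_1,e_1').
\]
If $e_1\neq e_1'$, the inner sum over $d_1$ vanishes by the first triple. If $e_1=e_1'$, the twists cancel, and the sum over $d_2$ vanishes by the second triple because $e_2\neq e_2'$. Here one must know that $e\neq e'$ forces $(e_1,e_2)\neq(e_1',e_2')$; this holds trivially since $e$ is determined by the pair.

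Cardinalities also need checking: $\#D=\#D_1\#D_2$ requires the map $(d_1,d_2)\mapsto Q_2d_1+d_2$ to be injective, and similarly for $E$. I would get this from unitarity rather than from the structure of the digit sets. If two labels $(d_1,d_2)\ne(d_1',d_2')$ gave the same $d$, the corresponding rows, indexed by labels, would coincide. Yet the computation above, applied symmetrically to rows and using row orthogonality of the factor matrices, shows that rows with distinct labels are orthogonal, which is a contradiction. Hence $D$ and $E$ have the full product size, and the $\#D\times\#E$ square matrix has orthonormal columns, so it is unitary.
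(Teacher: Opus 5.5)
Your proof is correct. The column computation over labels, the symmetric row computation that forces injectivity of $(d_1,d_2)\mapsto Q_2d_1+d_2$ and $(e_1,e_2)\mapsto e_1+Q_1e_2$, and the reduction of general $r$ to $r=2$ by grouping all check out.

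The paper omits the proof of this lemma. It records only the entrywise identity for item (3), which is exactly what you use there, so your write-up supplies the standard details the authors left out.

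There is an equivalent shortcut for the injectivity step, which is the row/column analogue of your item (2) argument. A Hadamard triple $(Q,D,E)$ forces the elements of $D$, and likewise those of $E$, to be pairwise distinct modulo $Q$. Hence $Q_2d_1+d_2=Q_2d_1'+d_2'$ gives $d_2=d_2'$ and then $d_1=d_1'$, and the same reasoning applies to $E$.
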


\begin{lemma}\label{lem:2.2}
For every $0\le a<b$, $(Q_{a,b},D_{a,b},E_{a,b})$ is a Hadamard triple.  Moreover, if $t\in\mathcal{T}_*$, then $(Q_{a,b},D_{a,b},tE_{a,b})$ is a Hadamard triple.
\end{lemma}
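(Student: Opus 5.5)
The plan is to reduce both assertions to Lemma \ref{lem:2.1}(4). The only extra work is checking that each individual factor $(q_k,D_k,E_k)$, and its $t$-scaled version $(q_k,D_k,tE_k)$, is a Hadamard triple.

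\emph{First assertion.} For each $k$ we have $q_k=N_{\omega_k}^{n_k}$, $D_k=B_{\omega_k}$ and $E_k=N_{\omega_k}^{n_k-1}L_{\omega_k}$. Applying Lemma \ref{lem:2.1}(3) to the Hadamard triple $(N_{\omega_k},B_{\omega_k},L_{\omega_k})$ with $r=n_k$ shows that $(q_k,D_k,E_k)$ is a Hadamard triple. This was already noted in the introduction.

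Next, apply Lemma \ref{lem:2.1}(4) to the $r=b-a$ triples $(Q_i,D_i',E_i'):=(q_{a+i},D_{a+i},E_{a+i})$ for $1\le i\le b-a$. The three components of the resulting triple are as follows.
\begin{itemize}
\item The first component is $\prod_{i}q_{a+i}=Q_{a,b}$.
\item The second component is $\sum_i\bigl(\prod_{\ell=i+1}^{b-a}q_{a+\ell}\bigr)D_{a+i}$. Its $i$-th coefficient is $q_{a+i+1}\cdots q_b$, so this is exactly $D_{a,b}$.
\item The third component is $\sum_i\bigl(\prod_{\ell=1}^{i-1}q_{a+\ell}\bigr)E_{a+i}$. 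Its $i$-th coefficient is $q_{a+1}\cdots q_{a+i-1}$, so this is exactly $E_{a,b}$.
\end{itemize}
Hence $(Q_{a,b},D_{a,b},E_{a,b})$ is a Hadamard triple. The only care needed is matching the index shift against the definitions, with empty products equal to $1$.

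\emph{Second assertion.} Fix $t\in\mathcal T_*$. Then for every $j$, $(N_j,B_j,tL_j)$ is a Hadamard triple. In particular, this holds for $j=\omega_k$, so $(N_{\omega_k},B_{\omega_k},tL_{\omega_k})$ is a Hadamard triple.

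Lemma \ref{lem:2.1}(3) then shows that $(q_k,D_k,N_{\omega_k}^{n_k-1}tL_{\omega_k})=(q_k,D_k,tE_k)$ is a Hadamard triple. Applying Lemma \ref{lem:2.1}(4) exactly as above gives a triple whose third component is
\[
\sum_i\Bigl(\prod_{\ell=1}^{i-1}q_{a+\ell}\Bigr)tE_{a+i}=tE_{a,b}.
\]
This is linearity of Minkowski sums under scalar multiplication, which holds elementwise.

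\emph{Possible obstacle.} The cardinality condition $\#D_k=\#(tE_k)$ could fail only if $t$ were not injective on $E_k$, which is impossible since $t\neq0$. This condition is anyway built into the hypothesis that the scaled triples are Hadamard triples. So the argument is essentially bookkeeping, and there is no real difficulty.
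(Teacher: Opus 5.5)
Your proposal is correct and follows the paper's proof: you apply Lemma \ref{lem:2.1}(3) to $(N_{\omega_k},B_{\omega_k},L_{\omega_k})$ and to $(N_{\omega_k},B_{\omega_k},tL_{\omega_k})$ to get the single-level triples $(q_k,D_k,E_k)$ and $(q_k,D_k,tE_k)$, then assemble them with Lemma \ref{lem:2.1}(4). Your explicit check of the index shift against the definitions of $D_{a,b}$ and $E_{a,b}$ is a step the paper leaves implicit.
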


\begin{proof}
By Lemma \ref{lem:2.1} (3), we obtain that $$(q_k,D_k,E_k)=(N_{\omega_k}^{n_k},B_{\omega_k},N_{\omega_k}^{n_k-1}L_{\omega_k})$$ is a Hadamard triple. By Lemma \ref{lem:2.1} (4), the first assertion holds. If $t\in\mathcal{T}_*$, then $(N_j,B_j,tL_j)$ is a Hadamard triple for every $1\leq j \leq m$. By using Lemma \ref{lem:2.1} (3) and Lemma \ref{lem:2.1} (4) again, we obtain that $(Q_{a,b},D_{a,b},tE_{a,b})$ is a Hadamard triple.
\end{proof}

For $\eta=(\eta_k)\in\Omega_m$, define the unit-exponent random convolution
\[
 \mu_\eta=\delta_{N_{\eta_1}^{-1}B_{\eta_1}}*
 \delta_{(N_{\eta_1}N_{\eta_2})^{-1}B_{\eta_2}}*\cdots .
\]

\begin{lemma} \label{lem:2.3}
   The family $\{\widehat{\mu}_{\eta}:\eta \in \Omega_m\}$ is equicontinuous on $\mathbb{R}$, i.e., for each $\varepsilon>0$, there exists $\delta>0$, such that $\lvert\widehat{\mu}_{\eta}(x)-\widehat{\mu}_{\eta}(y)\rvert <\varepsilon$ for all $|x-y|<\delta$ and all $\eta\in \Omega_m$.
\end{lemma}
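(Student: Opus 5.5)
The plan is to write $\widehat{\mu}_\eta$ as an infinite product and show that the whole family has a uniform Lipschitz bound. Set $m_B(\xi)=\frac{1}{\#B}\sum_{b\in B}e^{-2\pi i b\xi}$. Then
\[
\widehat{\mu}_\eta(\xi)=\prod_{k=1}^\infty m_{B_{\eta_k}}\!\left(\frac{\xi}{N_{\eta_1}\cdots N_{\eta_k}}\right).
\]
The product converges because each factor is a trigonometric polynomial with $m_B(0)=1$ and $|m_B|\le 1$. Only finitely many digit sets $B_1,\dots,B_m$ occur, so we may set
\[
C:=\max_{1\le j\le m}\max_{b\in B_j}|b|, \qquad N_{\min}:=\min_j N_j\ge 2 .
\]

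The key estimate is a Lipschitz bound for each factor and then a telescoping bound for finite products. Each factor satisfies
\[
|m_B(x)-m_B(y)|\le \frac{1}{\#B}\sum_{b\in B}|e^{-2\pi i bx}-e^{-2\pi i by}|\le 2\pi C|x-y| .
\]
For complex numbers of modulus at most $1$ we have $|\prod_k a_k-\prod_k b_k|\le\sum_k|a_k-b_k|$, proved by telescoping. Applying this to the truncated products $\widehat{\mu}_{\eta,n}$ (the Fourier transforms of the first $n$ convolution factors) gives
\[
|\widehat{\mu}_{\eta,n}(x)-\widehat{\mu}_{\eta,n}(y)|\le \sum_{k=1}^n 2\pi C\frac{|x-y|}{N_{\eta_1}\cdots N_{\eta_k}}\le 2\pi C|x-y|\sum_{k\ge1}N_{\min}^{-k}\le 2\pi C|x-y| .
\]
Letting $n\to\infty$ with pointwise convergence of the products, we obtain $|\widehat{\mu}_\eta(x)-\widehat{\mu}_\eta(y)|\le 2\pi C|x-y|$ for every $\eta\in\Omega_m$. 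Taking $\delta=\varepsilon/(2\pi C+1)$ then proves the lemma.

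An alternative route gives the same constant. Every $\mu_\eta$ is supported in $[-K,K]$ with $K=C\sum_k N_{\min}^{-k}\le C$. Hence
\[
|\widehat{\mu}_\eta(x)-\widehat{\mu}_\eta(y)|\le\int|e^{-2\pi i xu}-e^{-2\pi i yu}|\,d\mu_\eta(u)\le 2\pi K|x-y| .
\]
This is probably the cleanest version to write. The only point that needs care is the uniform support bound, which follows from the weak convergence of the finite convolutions, all of which are supported in $[-K,K]$. There is no real obstacle: the uniformity in $\eta$ comes entirely from the finiteness of the family of triples.
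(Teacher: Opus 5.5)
Your proposal is correct, but it takes a different route from the paper. The paper gives no argument of its own and simply invokes the equicontinuity argument of \cite[Lemma 2.3]{LIMIAO01} together with the proof of \cite[Corollary 4.4]{LIMIAO01}.

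You instead prove a stronger, quantitative statement directly: the uniform Lipschitz bound $|\widehat{\mu}_\eta(x)-\widehat{\mu}_\eta(y)|\le 2\pi C|x-y|$ with $C=\max_j\max_{b\in B_j}|b|$. This uses nothing beyond the finiteness of the family of digit sets and $N_j\ge 2$. Your second version, that every $\mu_\eta$ is supported in $[-C,C]$ and so has a $2\pi C$-Lipschitz Fourier transform, is the cleaner one. It is exactly the bounded-support mechanism the paper itself uses in the proof of Lemma \ref{lem:2.4}.

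Your approach buys self-containment and a global modulus of continuity, and it makes transparent that no arithmetic hypothesis is involved. In particular, you need not check that the results cited from \cite{LIMIAO01}, whose main theorem is stated under $\gcd(B_j-B_j)=1$, apply in the setting of Theorem \ref{thm:1.3}, where that condition is dropped. The citation buys only brevity.

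One small point in your product route: $m_B(0)=1$ and $|m_B|\le 1$ alone do not make the infinite product converge. What does is the summable bound $|1-m_{B_{\eta_k}}(\xi/(N_{\eta_1}\cdots N_{\eta_k}))|\le 2\pi C|\xi|N_{\min}^{-k}$, which your own telescoping estimate provides. Alternatively, use the weak convergence of the finite convolutions, as in your second route.
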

\begin{proof}
    By the equicontinuity argument in \cite[Lemma 2.3]{LIMIAO01}
and the proof of \cite[Corollary 4.4]{LIMIAO01}, we know that
\(\{\widehat{\mu}_{\eta}:\eta \in \Omega_m\}\) is equicontinuous.
\end{proof}

One may equip the symbolic space $\Omega_m$ with the metric 
$$d(\omega,\eta)=2^{-\min\{k \geq1:\omega_k \neq \eta_k \}}$$
whenever $\omega=(\omega_k), \eta=(\eta_k) $ are distinct elements of $\Omega_m$. Under this metric, $\Omega_m$ becomes a compact metric space. Moreover, a sequence $\{\omega^{(j)}\}_{j=1}^{\infty} \subset \Omega_m$ converges to $\omega$ if and only if for each $k \geq 1 $, there exists $j_0 \geq 1$ such that for every $j \geq j_0$, 
$$\omega_1^{(j)}\omega_2^{(j)}\cdots \omega_k^{(j)}=\omega_1\omega_2\cdots \omega_k.$$
\begin{lemma}\label{lem:2.4}
 Let $\xi \in \mathbb{R}$, the map
$$
\eta\mapsto \widehat{\mu}_{\eta}(\xi)
$$
is continuous on $\Omega_m$.

\end{lemma}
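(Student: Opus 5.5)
The plan is to write $\widehat{\mu}_\eta(\xi)$ as an infinite product and show that this product converges uniformly in $\eta$; continuity in $\eta$ then follows from the fact that finitely many factors depend only on finitely many coordinates. Set $M_k(\eta)=N_{\eta_1}\cdots N_{\eta_k}$ and $m_{B}(x)=\frac{1}{\#B}\sum_{b\in B}e^{-2\pi i b x}$. By definition of the infinite convolution (weak limit of the finite convolutions),
\[
\widehat{\mu}_\eta(\xi)=\prod_{k=1}^{\infty} m_{B_{\eta_k}}\bigl(M_k(\eta)^{-1}\xi\bigr).
\]
Denote by $P_n(\eta,\xi)$ the $n$-th partial product. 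Since $P_n(\eta,\xi)$ depends only on $\eta_1,\dots,\eta_n$, the map $\eta\mapsto P_n(\eta,\xi)$ is locally constant and hence continuous on $\Omega_m$.

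The key step is a uniform tail estimate. Let $C=\max_j\max_{b\in B_j}|b|$; since the $B_j$ are finite, $C<\infty$. Using $|e^{i\theta}-1|\le|\theta|$, each factor satisfies
\[
|1-m_{B_{\eta_k}}(x)|\le 2\pi C|x|.
\]
Since $N_j\ge2$ for every $j$, we have $M_k(\eta)\ge 2^k$ for every $\eta$. Because all factors have modulus at most $1$, the elementary inequality $|\prod a_k-\prod b_k|\le\sum|a_k-b_k|$ gives
\[
|\widehat{\mu}_\eta(\xi)-P_n(\eta,\xi)|\le\sum_{k>n}2\pi C|\xi|2^{-k}=2\pi C|\xi|2^{-n},
\]
and this bound is uniform in $\eta$. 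Equivalently, one could write $\mu_\eta=\mu_{\eta,n}*\mu_{\eta,>n}$, note that $\mu_{\eta,>n}$ is supported in $[-C/2^{n},C/2^{n}]$ up to a constant factor, and use $|\widehat{\mu}_{\eta,>n}(\xi)-1|\le 2\pi|\xi|\sup|\supp|$.

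To conclude, fix $\varepsilon>0$ and choose $n$ with $2\pi C|\xi|2^{-n}<\varepsilon/3$. If $d(\eta,\eta')<2^{-n}$, then $\eta$ and $\eta'$ agree in their first $n$ coordinates, so $P_n(\eta,\xi)=P_n(\eta',\xi)$. Hence
\[
|\widehat{\mu}_\eta(\xi)-\widehat{\mu}_{\eta'}(\xi)|<2\varepsilon/3.
\]
There is no real obstacle here. The only point requiring care is justifying the product formula, that is, that the infinite convolution converges weakly. This holds because the supports shrink geometrically, so the partial convolutions form a tight sequence with convergent Fourier transforms.
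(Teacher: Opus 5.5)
Your proof is correct and follows essentially the same route as the paper. Both arguments truncate at level $n$, bound the tail uniformly in $\eta$ using $M_k(\eta)\ge 2^k$, and use that the level-$n$ truncation depends only on $\eta_1,\dots,\eta_n$; the paper gets the tail bound from $\operatorname{spt}(\mu_{\eta,>n})\subset[-M2^{-n},M2^{-n}]$ (your stated alternative), while your product-difference inequality gives the same estimate $2\pi C|\xi|2^{-n}$, which likewise yields uniformity for $\xi$ in compact sets.
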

\begin{proof}

Let
$$
M=\max\{|b|: b\in B_j,\ 1\leq j\leq m\}.
$$
For $\eta\in\Omega_m$, write
$$
\mu_{\eta}
=
\mu_{\eta,n}*\mu_{\eta,>n},
$$
where
$$
\mu_{\eta,n}
=
\delta_{N_{\eta_1}^{-1}B_{\eta_1}}
*
\delta_{(N_{\eta_1}N_{\eta_2})^{-1}B_{\eta_2}}
*
\cdots
*
\delta_{(N_{\eta_1}\cdots N_{\eta_n})^{-1}B_{\eta_n}}.
$$
Since $|N_j|\geq 2$ for every $j$, the support of the tail measure
$\mu_{\eta,>n}$ is contained in an interval whose length tends to zero uniformly in
$\eta$. More precisely,
$$
\operatorname{spt}(\mu_{\eta,>n})\subset [-M2^{-n},M2^{-n}]
$$
for every $\eta\in\Omega_m$ and every $n\geq 1$. Let $I\subset\mathbb R$ be compact. Then, for $\xi\in I$,
\begin{align*}
   \left|\widehat{\mu}_{\eta}(\xi)-\widehat{\mu}_{\eta,n}(\xi)\right|
&=\left| \iint e^{-2\pi i\xi (x+y)} \,d\mu_{\eta,n}(x)\,d\mu_{\eta,>n}(y) -\int e^{-2\pi i\xi x}\,d\mu_{\eta,n}(x)\right|\\
&=\left| \iint e^{-2\pi i\xi (x+y)} \,d\mu_{\eta,n}(x)\,d\mu_{\eta,>n}(y) -\iint e^{-2\pi i\xi x}\,d\mu_{\eta,n}(x)\,d\mu_{\eta,>n}(y)\right|\\
&=
\left|
\iint e^{-2\pi i\xi x}
\left(e^{-2\pi i\xi y}-1\right)
\,d\mu_{\eta,n}(x)\,d\mu_{\eta,>n}(y)
\right|  \\
&\leq
\sup_{\xi\in I,\ |y|\leq M2^{-n}}
\left|e^{-2\pi i\xi y}-1\right|. 
\end{align*}
Since $I$ is compact, the right-hand side tends to $0$ as $n\to\infty$, uniformly in
$\eta\in\Omega_m$. Hence
$$
\sup_{\eta\in\Omega_m}\sup_{\xi\in I}
\left|\widehat{\mu}_{\eta}(\xi)-\widehat{\mu}_{\eta,n}(\xi)\right|
\rightarrow 0.
$$
Now suppose that $\eta^{(r)}\to\eta$ in $\Omega_m$. By the definition of the topology on
$\Omega_m$, for each fixed $n$, there exists $r_0$ such that, for all $r\geq r_0$,
$$
\eta^{(r)}_1\eta^{(r)}_2\cdots\eta^{(r)}_n
=
\eta_1\eta_2\cdots\eta_n.
$$
Therefore,
$$
\mu_{\eta^{(r)},n}=\mu_{\eta,n},
$$
and hence
$$
\widehat{\mu}_{\eta^{(r)},n}(\xi)
=
\widehat{\mu}_{\eta,n}(\xi)
\qquad \text{for all } \xi\in\mathbb R.
$$
Given $\varepsilon>0$, choose $n$ large enough so that
\[
\sup_{\eta\in\Omega_m}\sup_{\xi\in I}
\left|\widehat{\mu}_{\eta}(\xi)-\widehat{\mu}_{\eta,n}(\xi)\right|
<\frac{\varepsilon}{2}.
\]
For such $r \geq r_0$, we obtain
\begin{align*}
\sup_{\xi\in I}\left|\widehat{\mu}_{\eta^{(r)}}(\xi)-\widehat{\mu}_{\eta}(\xi)\right|\leq\sup_{\xi\in I}\left|\widehat{\mu}_{\eta^{(r)}}(\xi)-\widehat{\mu}_{\eta^{(r)},n}(\xi)\right| +\sup_{\xi\in I}
\left|\widehat{\mu}_{\eta,n}(\xi)-\widehat{\mu}_{\eta}(\xi)
\right|  <\varepsilon.\end{align*}
Thus,
$$
\sup_{\xi\in I}
\left|\widehat{\mu}_{\eta^{(r)}}(\xi)-\widehat{\mu}_{\eta}(\xi)\right|\to 0.$$
In particular, for every fixed $\xi\in\mathbb R$,
$$
\widehat{\mu}_{\eta^{(r)}}(\xi)\to \widehat{\mu}_{\eta}(\xi).
$$
Therefore, $\eta\mapsto \widehat{\mu}_{\eta}(\xi)$ is continuous for each fixed $\xi$, and the convergence is uniform for $\xi$ in compact subsets of $\mathbb R$.

\end{proof}

Finally, we conclude this section with a classical spectral criterion \cite{AHH01}.

\begin{theorem}\label{thm:2.5}
Suppose that \(\{\alpha_n\}_{n=1}^{\infty}\) is an increasing sequence of integers and \(\delta>0\).
If
\[
0\in\Lambda_{\alpha_n}\subset\Lambda_{\alpha_{n+1}},
\]
\(\Lambda_{\alpha_n}\) is a spectrum of \(\mu_{\alpha_n}\), and
\[
\inf_{n\ge1}
\inf_{\lambda \in \Lambda_{\alpha_n},\,|\xi|\le\delta}
\left|\widehat{\mu}_{>\alpha_n}(\lambda+\xi)\right|^2>0,
\]
then
\[
\Lambda=\bigcup_{n=1}^{\infty}\Lambda_{\alpha_n}
\]
is a spectrum of \(\mu\).
\end{theorem}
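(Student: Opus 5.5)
The plan is the standard Jorgensen--Pedersen route: prove orthogonality of $E(\Lambda)$ directly, then prove completeness by showing that
\[
Q(\xi):=\sum_{\lambda\in\Lambda}|\widehat\mu(\xi+\lambda)|^2
\]
is identically $1$. Throughout I use the factorization $\widehat\mu=\widehat\mu_{\alpha_n}\,\widehat\mu_{>\alpha_n}$, which follows from $\mu=\mu_{\alpha_n}*\mu_{>\alpha_n}$. Write
\[
c:=\inf_{n\ge1}\ \inf_{\lambda\in\Lambda_{\alpha_n},\,|\xi|\le\delta}\left|\widehat\mu_{>\alpha_n}(\lambda+\xi)\right|^2>0 .
\]

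\emph{Orthogonality.} Take distinct $\lambda,\lambda'\in\Lambda$. Since the $\Lambda_{\alpha_n}$ are nested, both points lie in a single $\Lambda_{\alpha_n}$. That set is a spectrum of $\mu_{\alpha_n}$, so $\widehat\mu_{\alpha_n}(\lambda-\lambda')=0$, and therefore
\[
\widehat\mu(\lambda-\lambda')=\widehat\mu_{\alpha_n}(\lambda-\lambda')\,\widehat\mu_{>\alpha_n}(\lambda-\lambda')=0 .
\]
So $E(\Lambda)$ is orthonormal in $L^2(\mu)$. Bessel's inequality applied to $e_{-\xi}$ then gives $Q(\xi)\le 1$ for every $\xi\in\mathbb R$.

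\emph{Completeness on a neighbourhood of $0$.} Fix $|\xi|\le\delta$. Because $\Lambda_{\alpha_n}$ is a spectrum of $\mu_{\alpha_n}$, Parseval gives
\[
\sum_{\lambda\in\Lambda_{\alpha_n}}|\widehat\mu_{\alpha_n}(\xi+\lambda)|^2=1 .
\]
On $\Lambda_{\alpha_n}$ the denominator $|\widehat\mu_{>\alpha_n}(\xi+\lambda)|^2$ is at least $c$, so I can rewrite this as
\[
1=\sum_{\lambda\in\Lambda} f_n(\lambda),\qquad f_n(\lambda)=\mathbf 1_{\Lambda_{\alpha_n}}(\lambda)\,\frac{|\widehat\mu(\xi+\lambda)|^2}{|\widehat\mu_{>\alpha_n}(\xi+\lambda)|^2}.
\]
The terms are dominated uniformly in $n$: $0\le f_n(\lambda)\le c^{-1}|\widehat\mu(\xi+\lambda)|^2$, and this bound is summable over $\Lambda$ because $Q\le1$. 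Pointwise, fix $\lambda\in\Lambda$; then $\lambda\in\Lambda_{\alpha_n}$ for all large $n$, and $|\widehat\mu_{>\alpha_n}(\xi+\lambda)|\to1$. This holds because $\operatorname{spt}\mu_{>\alpha_n}\subset[-C/M_{\alpha_n},C/M_{\alpha_n}]$ with $C$ a bound on the digits, $M_{\alpha_n}\to\infty$, and the estimate $|\widehat\mu_{>\alpha_n}(x)-1|\le\sup_{|y|\le C/M_{\alpha_n}}|e^{-2\pi i xy}-1|$ is the one used in the proof of Lemma~\ref{lem:2.4}. Hence $f_n(\lambda)\to|\widehat\mu(\xi+\lambda)|^2$, and dominated convergence for sums gives $Q(\xi)=1$ for all $|\xi|\le\delta$.

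\emph{Extension to all $\xi$.} Since $\mu$ has compact support, each $|\widehat\mu(\xi+\lambda)|^2=\widehat\mu(\xi+\lambda)\overline{\widehat\mu(\bar\xi+\lambda)}$ extends to an entire function of $\xi$. The series $Q$ converges locally uniformly on a complex strip: the standard argument uses the bound $|\widehat\mu(z+\lambda)|\le e^{2\pi C|\operatorname{Im}z|}$ together with the orthogonality/Bessel estimate, as in Jorgensen--Pedersen. Thus $Q$ is real-analytic on $\mathbb R$. It equals $1$ on $[-\delta,\delta]$, so $Q\equiv1$, and by the Jorgensen--Pedersen criterion $\Lambda$ is a spectrum of $\mu$.

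\emph{Main obstacle.} The key step is the limit interchange in the completeness argument. The uniform lower bound $c>0$ is exactly what supplies the dominating function $c^{-1}|\widehat\mu(\xi+\lambda)|^2$. The only structural input beyond the hypotheses is that the tails $\mu_{>\alpha_n}$ shrink to $\delta_0$, which holds here because $M_{\alpha_n}\to\infty$ and the digit sets come from finitely many $B_j$. The analytic-continuation step is routine and will be cited rather than reproved.
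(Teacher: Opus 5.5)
The paper gives no proof of this criterion; it quotes it from \cite{AHH01}. Your argument is the standard proof of that criterion, and it is correct: orthogonality from the nesting $\Lambda_{\alpha_n}\subset\Lambda_{\alpha_{n+1}}$, $Q\le 1$ by Bessel, $Q\equiv 1$ on $[-\delta,\delta]$ by dominated convergence with majorant $c^{-1}|\widehat\mu(\xi+\lambda)|^2$ together with the shrinking support of $\mu_{>\alpha_n}$, then analytic continuation of $Q$. In the analytic step, the local bound should come from Bessel's inequality applied to $x\mapsto e^{-2\pi i z x}\in L^2(\mu)$, which gives $\sum_{\lambda}|\widehat\mu(z+\lambda)|^2\le e^{4\pi C|\operatorname{Im} z|}$, followed by Vitali's theorem; the pointwise bound $|\widehat\mu(z+\lambda)|\le e^{2\pi C|\operatorname{Im} z|}$ alone would not suffice, but you did invoke the Bessel estimate.
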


\section{Double lattice lemma}

This section contains many technical proofs, and the results obtained serve our subsequent constructions. In order to ultimately construct uncountably many spectra, we need to ensure that at each level there are at least two choices for the construction. This is the motivation for obtaining Corollary \ref{cor:3.3} and Lemma \ref{lem:3.5}.

\begin{lemma}[Hadamard triple counting identity]\label{lem:3.1}
Let $(Q,B,E)$ and $(Q,B,tE)$ be Hadamard triples for some $t\in\mathbb{Z}\setminus\{0\}$. Then
\begin{equation}\label{eq:3.1}
 \sum_{k=0}^{Q-1}\left|\widehat{\delta_{Q^{-1}B}}(\xi+tk)\right|^2=\frac{Q}{\#B},
 \qquad \xi\in\mathbb{R}.
\end{equation}
\end{lemma}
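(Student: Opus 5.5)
We prove the identity by expanding the square and summing the exponentials over $k$; only the Hadamard property of $(Q,B,tE)$ is used, and through it only the fact that distinct digits of $B$ are incongruent modulo $Q$. Write $\#B=\#E=n$. Then
\[
\widehat{\delta_{Q^{-1}B}}(\zeta)=\frac1{n}\sum_{b\in B}e^{-2\pi i b\zeta/Q},
\]
so that
\[
\left|\widehat{\delta_{Q^{-1}B}}(\zeta)\right|^2=\frac1{n^2}\sum_{b,b'\in B}e^{-2\pi i (b-b')\zeta/Q}.
\]
Substituting $\zeta=\xi+tk$ and summing over $0\le k\le Q-1$ gives
\[
\sum_{k=0}^{Q-1}\left|\widehat{\delta_{Q^{-1}B}}(\xi+tk)\right|^2=\frac1{n^2}\sum_{b,b'\in B}e^{-2\pi i (b-b')\xi/Q}\sum_{k=0}^{Q-1}e^{-2\pi i (b-b')tk/Q}.
\]
The inner sum over $k$ is a complete geometric sum of $Q$-th roots of unity. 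It equals $Q$ if $Q\mid t(b-b')$, and $0$ otherwise.

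Next we show that $Q\mid t(b-b')$ forces $b=b'$. Since $(Q,B,tE)$ is a Hadamard triple, the rows of the matrix $\bigl(e^{-2\pi i b (t\ell)/Q}\bigr)_{b\in B,\ell\in E}$ are pairwise orthogonal. If $b\ne b'$ but $Q\mid t(b-b')$, then $e^{-2\pi i b t\ell/Q}=e^{-2\pi i b' t\ell/Q}$ for every $\ell\in E$. The two rows indexed by $b$ and $b'$ would then coincide, so their inner product would be $n\neq 0$, a contradiction.

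Therefore only the diagonal terms $b=b'$ survive, and for these the phase factor $e^{-2\pi i (b-b')\xi/Q}$ equals $1$. The double sum reduces to $\frac1{n^2}\cdot n\cdot Q=Q/n=Q/\#B$, which is \eqref{eq:3.1}, and it holds for every $\xi\in\mathbb R$.

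The one step needing care is this incongruence argument, i.e. ruling out $Q\mid t(b-b')$ for $b\ne b'$. It is exactly where the hypothesis that $(Q,B,tE)$ is Hadamard enters: without it, some $t$ would make distinct digits collide and the off-diagonal terms would not vanish.
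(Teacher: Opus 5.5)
Your proof is correct and follows the paper's argument essentially verbatim: expand the square, evaluate the complete geometric sum over $k$, and use the Hadamard property of $(Q,B,tE)$ to rule out $Q\mid t(b-b')$ for $b\neq b'$, so only the diagonal terms survive. Your explicit argument that the rows indexed by $b$ and $b'$ would coincide simply spells out the step the paper compresses into the remark that $(Q,tB,E)$ is also a Hadamard triple.
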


\begin{proof}
Since $(Q,B,tE)$ is a Hadamard triple, the matrix for $(Q,tB,E)$ has the same entries. Hence $(Q,tB,E)$ is also a Hadamard triple. Therefore, if $b\ne b'$ in $B$, we have
\begin{equation}\label{eq:3.2}
 Q\nmid t(b-b').
\end{equation}
Expanding the left side of \eqref{eq:3.1}, we obtain
\begin{align*}
\sum_{k=0}^{Q-1}\left|\widehat{\delta_{Q^{-1}B}}(\xi+tk)\right|^2=&\frac1{(\#B)^2}\sum_{k=0}^{Q-1}
 \sum_{b\in B}e^{-2\pi i b(\xi+tk)/Q}\sum_{b'\in B}e^{2\pi i b'(\xi+tk)/Q}
 \\
=&\frac1{(\#B)^2}\sum_{b\in B}\sum_{b'\in B}e^{-2\pi i(b-b')\xi/Q}
 \sum_{k=0}^{Q-1}e^{-2\pi i t(b-b')k/Q}.
\end{align*}
It is well known that
$$\sum_{k=0}^{Q-1}e^{-2\pi i t(b-b')k/Q}=\begin{cases}
Q, & \text{if } Q\mid t(b-b'),\\
0, & \text{if } Q\nmid t(b-b').
\end{cases}$$
By \eqref{eq:3.2}, if $Q\mid t(b-b')$, we have $b=b'$. Hence 
$$\sum_{k=0}^{Q-1}\left|\widehat{\delta_{Q^{-1}B}}(\xi+tk)\right|^2
=
\frac1{(\#B)^2}\sum_{b\in B}Q
=
\frac{Q}{\#B}.$$
\end{proof}

Li-Miao-Wang's finite-zero lemma \cite[Lemma 4.1]{LIMIAO01} says that for each $h>0$ the set
\begin{equation}\label{eq:3.3}
 [-h,h]\cap\bigcup_{\eta\in\Omega_m}\{\xi:\widehat{\mu_\eta}(\xi)=0\}
\end{equation}
is finite. Let $\sigma$ be the left shift operator on $\Omega_m$, i.e., $$\sigma(\omega_1\omega_2\cdots)=\omega_2\omega_3\cdots$$
where $(\omega_k) \in \Omega_m$. Here are the most important properties in this paper.

\begin{proposition}\label{prop:3.2}
Assume that \eqref{eq:1.5} holds.  Let $t\in\mathcal{T}_*\cap\mathbb{N}$.  Then there exist constants $\varepsilon_t>0$, $\delta_t>0$ and an integer $K_t\ge1$ such that for every $\eta\in\Omega_m$ and every $x\in[0,t]$ there are two distinct integers
$$
 k_1,k_2\in\{0,1,\dots,K_t\}
$$
for which
\begin{equation}\label{eq:3.4}
 |\widehat{\mu_\eta}(x+tk_i+y)|\ge\varepsilon_t,
 \qquad |y|<\delta_t,
 \quad i=1,2.
\end{equation}
\end{proposition}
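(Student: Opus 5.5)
The plan is to iterate the Hadamard counting identity of Lemma \ref{lem:3.1} over a fixed number $n$ of levels. The gap \eqref{eq:1.5} makes the total Fourier mass of the first $n$ factors at least $\rho^n$, which is large. The finite-zero lemma \eqref{eq:3.3} then shows that only boundedly many indices $k$ can be spoiled by zeros of the tail. Fix $t\in\mathcal{T}_*\cap\mathbb N$, $\eta\in\Omega_m$ and $n\ge1$, and write
\[
Q=N_{\eta_1}\cdots N_{\eta_n},\qquad B=\sum_{i=1}^n\Bigl(\prod_{\ell=i+1}^nN_{\eta_\ell}\Bigr)B_{\eta_i},\qquad E=\sum_{i=1}^n\Bigl(\prod_{\ell<i}N_{\eta_\ell}\Bigr)L_{\eta_i}.
\]
By Lemma \ref{lem:2.1}(4), applied once to the triples $(N_{\eta_i},B_{\eta_i},L_{\eta_i})$ and once to $(N_{\eta_i},B_{\eta_i},tL_{\eta_i})$ (the latter are Hadamard since $t\in\mathcal T_*$), both $(Q,B,E)$ and $(Q,B,tE)$ are Hadamard triples. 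Moreover
\[
\widehat{\mu_\eta}(\xi)=\widehat{\delta_{Q^{-1}B}}(\xi)\,\widehat{\mu_{\sigma^n\eta}}(\xi/Q),
\]
and $Q/\#B=\prod_i N_{\eta_i}/\#B_{\eta_i}\ge\rho^n$. So Lemma \ref{lem:3.1} gives, for every real $x$,
\[
\sum_{k=0}^{Q-1}\bigl|\widehat{\delta_{Q^{-1}B}}(x+tk)\bigr|^2\ge\rho^n ,
\]
with each summand at most $1$.

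The second step handles zeros of the tail factor. For $x\in[0,t]$ and $0\le k\le Q-1$ the argument $(x+tk)/Q$ lies in $[0,t]$. Apply \eqref{eq:3.3} with $h=t+1$ to get a finite set $Z$, independent of $\eta$, containing all zeros in $[-1,t+1]$ of every $\widehat{\mu_{\eta'}}$. Now fix $n$ so large that $\rho^n\ge\#Z+3$, and put $K_t=N_{\max}^n-1$ with $N_{\max}=\max_jN_j$, so that $Q-1\le K_t$. Choose $\delta_0>0$ with $4\delta_0<t/N_{\max}^n$. Consecutive points $(x+tk)/Q$ are spaced $t/Q>4\delta_0$ apart, so each $2\delta_0$-neighbourhood of a point of $Z$ contains at most one of them. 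Hence at most $\#Z$ indices $k$ are \emph{bad}, meaning $(x+tk)/Q$ lies within $2\delta_0$ of $Z$. Removing them leaves
\[
\sum_{k\ \mathrm{good}}\bigl|\widehat{\delta_{Q^{-1}B}}(x+tk)\bigr|^2\ge3 .
\]
Set $c=1/N_{\max}^n$. If at most one good $k$ had $|\widehat{\delta_{Q^{-1}B}}(x+tk)|^2\ge c$, this sum would be below $1+Qc\le2$, a contradiction. So there are two distinct good indices $k_1,k_2\le K_t$ where the first factor is at least $\sqrt c$.

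The third step, which I expect to be the main technical point, is a uniform lower bound on the tail factor away from $Z$. Let $F=[-1,t+1]\setminus\{\xi:\operatorname{dist}(\xi,Z)<\delta_0\}$, a compact set. Lemma \ref{lem:2.3} (equicontinuity in $\xi$) and Lemma \ref{lem:2.4} (continuity in $\eta$) together make $(\eta',\xi)\mapsto\widehat{\mu_{\eta'}}(\xi)$ jointly continuous on the compact space $\Omega_m\times F$. This function has no zeros there by the choice of $Z$, so its modulus has a positive minimum $\beta$.

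Finally, perturb by $y$. The functions $\widehat{\delta_{Q^{-1}B}}$ range over a finite family, because there are only finitely many words $\eta_1\cdots\eta_n$; they are therefore uniformly Lipschitz. Choose $\delta_t\le\delta_0$ so small that $|y|<\delta_t$ changes each such function by at most $\sqrt c/2$, and moves $(x+tk_i+y)/Q$ by less than $\delta_0$. Then for each good $k_i$ this point stays in $F$. It follows that
\[
|\widehat{\mu_\eta}(x+tk_i+y)|\ge\tfrac{\sqrt c}{2}\,\beta=:\varepsilon_t ,
\]
with $\varepsilon_t$, $\delta_t$, $K_t$ independent of $\eta$ and $x$, which is \eqref{eq:3.4}.
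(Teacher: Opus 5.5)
Your proof is correct and has the same skeleton as the paper's:
\begin{itemize}
\item factor $\widehat{\mu_\eta}$ into the finite block $\widehat{\delta_{Q^{-1}B}}$ and the rescaled tail $\widehat{\mu_{\sigma^n\eta}}(\cdot/Q)$;
\item use Lemma~\ref{lem:3.1} together with the gap to get total mass at least $\rho^n$;
\item outnumber the finitely many tail zeros supplied by \eqref{eq:3.3}.
\end{itemize}

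The difference is in how the uniform constants are extracted. The paper first argues only qualitatively. It counts the $k$ with $\widehat{\delta_{Q_u^{-1}B_u}}(x+tk)\neq0$ and discards the at most $\#Z_t$ indices where the tail vanishes exactly, leaving two indices with $\widehat{\mu_\eta}(x+tk)\neq0$. It then gets $\varepsilon_t$ in one step as the minimum of the positive continuous function $G(\eta,x)=\max_{i<j}\min\{F_i,F_j\}$ on the compact set $\Omega_m\times[0,t]$, and takes $\delta_t$ from equicontinuity afterwards.

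You instead bound each factor quantitatively. The block factor is bounded below by the pigeonhole threshold $N_{\max}^{-n/2}$. The tail is bounded below by its minimum $\beta$ on a compact set that avoids a $\delta_0$-neighbourhood of $Z$. This costs you an extra spacing observation: the points $(x+tk)/Q$ are $t/Q$-separated, so thickening $Z$ does not increase the number of discarded indices.

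The paper's route is shorter. Yours gives the explicit form $\varepsilon_t=\tfrac12N_{\max}^{-n/2}\beta$ and handles the perturbation $y$ factor by factor. Your derivation of joint continuity from Lemmas~\ref{lem:2.3} and~\ref{lem:2.4} is also exactly what the paper's continuity claim for $G$ tacitly needs, since separate continuity alone would not suffice.

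One small fix: also require $\delta_0<1$ (or $\delta_t\le 1$). As written, when $t/N_{\max}^n$ is large, nothing prevents the perturbed points $(x+tk_i+y)/Q$ from leaving $[-1,t+1]$, which is where $\beta$ is defined.
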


\begin{proof}
Let
$$
 Z_t=[0,t]\cap\bigcup_{\eta\in\Omega_m}\{\xi:\widehat{\mu_\eta}(\xi)=0\}.
$$
By \eqref{eq:3.3}, $Z_t$ is finite. Choose $r\in \mathbb{N}^+$ so large that
$$\rho^r>\#Z_t+2.$$
For a word $u=u_1\cdots u_r$, define
$$
 Q_u=N_{u_1}\cdots N_{u_r},
$$
and
$$
 B_u=N_{u_2}\cdots N_{u_r}B_{u_1}+N_{u_3}\cdots N_{u_r}B_{u_2}+\cdots+B_{u_r}.
$$
Then
$$
 \#B_u=\prod_{i=1}^r\#B_{u_i},
 \qquad
 \frac{Q_u}{\#B_u}\ge\rho^r>\#Z_t+2.
$$
By Lemma \ref{lem:2.2}, both \((Q_u,B_u,E_u)\) and \((Q_u,B_u,tE_u)\) are Hadamard triples, where
\[
E_u=L_{u_1}+N_{u_1}L_{u_2}+\cdots+N_{u_1}\cdots N_{u_{r-1}}L_{u_r}.
\]
Lemma \ref{lem:3.1} gives, for all $\xi\in\R$,
\begin{equation*}
 \sum_{k=0}^{Q_u-1}\left|\widehat{\delta_{Q_u^{-1}B_u}}(\xi+tk)\right|^2
 =\frac{Q_u}{\#B_u}.
\end{equation*}
Since \(\delta_{Q_u^{-1}B_u}\) is a probability measure,
\[
\left|\widehat{\delta_{Q_u^{-1}B_u}}\right|\le1.
\]
Hence
\begin{equation}\label{eq:3.5}
\begin{split}
    &\#\left\{k \in \{0,1,\ldots ,Q_u-1\}: \widehat{\delta_{Q_u^{-1}B_u}}(\xi+tk) \neq 0\right\} \\ 
  = & \sum_{k=0}^{Q_u-1} \chi_{\{k:\widehat{\delta_{Q_u^{-1}B_u}}(\xi+tk) \neq 0\}}(k) \geq \sum_{k=0}^{Q_u-1}\left|\widehat{\delta_{Q_u^{-1}B_u}}(\xi+tk)\right|^2=\frac{Q_u}{\#B_u}.
\end{split}
\end{equation}
On the other hand, we note that $\frac{x+tk}{Q_u} \in [0,t]$ for all $0\leq k < Q_u$. This means that there are at most $\#Z_t$ values of $k$ such that 
\begin{equation}\label{eq:3.6}
\widehat{\mu_{\sigma^r\eta}}
\left(\frac{x+tk}{Q_u}\right)=0.
\end{equation}
Fix $\eta\in\Omega_m$ and $x\in[0,t]$, and let $u=\eta_1\cdots\eta_r$.  For $0\le k<Q_u$,
$$
 \widehat{\mu_\eta}(x+tk)=
 \widehat{\delta_{Q_u^{-1}B_u}}(x+tk)
 \widehat{\mu_{\sigma^r\eta}}\left(\frac{x+tk}{Q_u}\right).
$$
Combining \eqref{eq:3.5}, \eqref{eq:3.6} and $\frac{Q_u}{\#B_u}>\#Z_t+2$, we see that at least two numbers $k_1$ and $k_2$ can be chosen from the set $\{0,1,\ldots ,Q_u-1\}$ such that 
$$\widehat{\mu_\eta}(x+tk_i)\neq 0 \qquad \text{ for }i=1,2.$$
Let
\[
 K_t=\max\{Q_u:|u|=r\}-1.
\]
For each $0\le j\le K_t$, define
\[
F_j(\eta,x)=|\widehat{\mu_\eta}(x+tj)|,
\qquad (\eta,x)\in\Omega_m\times[0,t].
\]
Also define
\[
G(\eta,x)=\max_{0\le i<j\le K_t}\min\{F_i(\eta,x),F_j(\eta,x)\}.
\]
The preceding counting argument shows that $G(\eta,x)>0$ for every
$(\eta,x)\in\Omega_m\times[0,t]$. By Lemma \ref{lem:2.4} and the continuity of Fourier transforms in the frequency variable, $G$ is continuous on the compact set $\Omega_m\times[0,t]$. Therefore
\[
\varepsilon_t^{(0)}:=\min_{(\eta,x)\in\Omega_m\times[0,t]}G(\eta,x)>0.
\]
For every $(\eta,x)$, choose two distinct integers
$k_1,k_2\in\{0,1,\ldots,K_t\}$ such that
\[
|\widehat{\mu_\eta}(x+tk_i)|\ge \varepsilon_t^{(0)},
\qquad i=1,2.
\]
By Lemma \ref{lem:2.3}, the family
$\{\widehat{\mu_\eta}:\eta\in\Omega_m\}$ is equicontinuous on compact intervals. Hence there exists $\delta_t>0$ such that, whenever $|y|<\delta_t$,
\[
|\widehat{\mu_\eta}(x+tk_i+y)-\widehat{\mu_\eta}(x+tk_i)|
<\frac12\varepsilon_t^{(0)}
\]
for all $\eta\in\Omega_m$, all $x\in[0,t]$, all $0\le k_i\le K_t$, and $i=1,2$. Taking
\[
\varepsilon_t=\frac12\varepsilon_t^{(0)}
\]
gives \eqref{eq:3.4}.

\end{proof}

We can obtain the connection between $\mu_{\eta}$ and the tails of \eqref{eq:1.1} by completing the missing terms. For any $\omega=(\omega_k) \in \Omega_m$ and $\{n_k\} \subset \mathbb{N}^+$, we expand the sequence $\omega$ by repeating each $\omega_k$ exactly $n_k$ times:
 $$
 \eta=\underbrace{\omega_1\cdots\omega_1}_{n_1}
 \underbrace{\omega_2\cdots\omega_2}_{n_2}
 \underbrace{\omega_3\cdots\omega_3}_{n_3}\cdots .
$$
Write $s_0=0$ and $s_k=n_1+\cdots+n_k$ for $k\ge1$. The standard factorization gives
\begin{equation*}
 \mu_{\sigma^{s_k}\eta}=\nu_{>k}*\rho_k
\end{equation*}
for some probability measure $\rho_k$.  Consequently,
\begin{equation}\label{eq:3.7}
 |\widehat{\mu_{\sigma^{s_k}\eta}}(\xi)|\le |\widehat{\nu}_{>k}(\xi)|.
\end{equation}
\begin{corollary}\label{cor:3.3}
Let $t\in\mathcal{T}_*\cap\mathbb{N}$.  There exist $\varepsilon_t>0$, $\delta_t>0$ and $K_t\ge1$ such that for every tail index $k\ge0$ and every $x\in[0,t]$ there are two distinct integers
 $$
 r_1,r_2\in\{0,1,\dots,K_t\}
$$
satisfying
 $$
 |\widehat{\nu}_{>k}(x+tr_i+y)|\ge\varepsilon_t,
 \qquad |y|<\delta_t,
 \quad i=1,2.
$$
\end{corollary}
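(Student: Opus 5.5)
This is a direct consequence of Proposition \ref{prop:3.2} combined with the domination inequality \eqref{eq:3.7}. We keep the constants $\varepsilon_t,\delta_t,K_t$ produced by Proposition \ref{prop:3.2}; they depend only on $t$ and the finite family of triples, not on $\eta$. The plan is to reduce every scaled tail $\nu_{>k}$ to a unit-exponent random convolution $\mu_{\eta'}$ for a suitable $\eta'\in\Omega_m$.

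\textbf{Step 1: build the expanded sequence.} Given $\omega$ and $\{n_k\}$, form the expanded sequence
\[
\eta=\underbrace{\omega_1\cdots\omega_1}_{n_1}\underbrace{\omega_2\cdots\omega_2}_{n_2}\cdots
\]
as above, and let $s_k=n_1+\cdots+n_k$.

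\textbf{Step 2: identify $\mu_{\sigma^{s_k}\eta}$ with $\nu_{>k}$.} We verify the factorization $\mu_{\sigma^{s_k}\eta}=\nu_{>k}*\rho_k$ behind \eqref{eq:3.7}. Rescaling $\mu_{>k}$ by $M_k$ gives
\[
\nu_{>k}=\delta_{q_{k+1}^{-1}D_{k+1}}*\delta_{(q_{k+1}q_{k+2})^{-1}D_{k+2}}*\cdots .
\]
The block of $\mu_{\sigma^{s_k}\eta}$ corresponding to $\omega_{k+1}$ consists of $n_{k+1}$ factors, whose scales are $N_{\omega_{k+1}}^{-1},\dots,N_{\omega_{k+1}}^{-n_{k+1}}$ times the preceding product. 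The last factor of that block has scale $q_{k+1}^{-1}$ times the preceding product, and it reproduces exactly the $D_{k+1}$-factor of $\nu_{>k}$. Collecting these last-of-block factors gives $\nu_{>k}$, and all remaining factors convolve to a probability measure $\rho_k$. By commutativity of convolution, and since $|\widehat{\rho_k}|\le1$, we get \eqref{eq:3.7}:
\[
|\widehat{\mu_{\sigma^{s_k}\eta}}(\xi)|\le|\widehat{\nu}_{>k}(\xi)|\quad\text{for all }\xi\in\R.
\]

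\textbf{Step 3: apply Proposition \ref{prop:3.2} and transfer the bound.} Fix $k\ge0$ and $x\in[0,t]$. Apply Proposition \ref{prop:3.2} to $\eta'=\sigma^{s_k}\eta\in\Omega_m$ to get distinct $r_1,r_2\in\{0,\dots,K_t\}$ with
\[
|\widehat{\mu_{\eta'}}(x+tr_i+y)|\ge\varepsilon_t\quad\text{for }|y|<\delta_t .
\]
By \eqref{eq:3.7}, $|\widehat{\nu}_{>k}(x+tr_i+y)|\ge\varepsilon_t$ for the same $y$, which is the claim. For $k=0$ we have $s_0=0$ and $M_0=1$, so $\nu_{>0}=\mu$ and the argument applies unchanged.

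The only genuinely delicate point is Step 2, the factorization. We need to check that the scales match exactly, i.e.\ that the product of $N_{\eta'_i}$ up to the end of each block equals $q_{k+1}\cdots q_j$. We also need to know that an infinite convolution of the leftover factors still defines a probability measure; it does, because the supports shrink geometrically, as in the proof of Lemma \ref{lem:2.4}. Everything else follows because the constants in Proposition \ref{prop:3.2} are uniform in $\eta$.
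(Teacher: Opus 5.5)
Your proposal is correct and follows the paper's argument: apply Proposition~\ref{prop:3.2} to $\sigma^{s_k}\eta$, whose constants are uniform in $\eta$, and transfer the lower bound to $\widehat{\nu}_{>k}$ via \eqref{eq:3.7}. Your Step 2 also proves the factorization $\mu_{\sigma^{s_k}\eta}=\nu_{>k}*\rho_k$, which the paper only asserts: the last factor of the $j$-th block is $\delta_{(q_{k+1}\cdots q_{k+j})^{-1}D_{k+j}}$, and regrouping the infinite convolution is legitimate because the product of the factor Fourier transforms converges absolutely.
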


\begin{proof}
Apply Proposition \ref{prop:3.2} to $\sigma^{s_k}\eta$ and use \eqref{eq:3.7}.
\end{proof}

\begin{remark}
    Note that the above conclusion and the following conclusion hold for an arbitrary $\omega \in \Omega_m$ and $\{n_k\} \subset \mathbb{N}^{+}$.
\end{remark}

In addition to ensuring that the constructed set satisfies the spectral property, we also need to ensure that a part of it is very sparse. The following lemma guarantees that this can be achieved.

\begin{lemma}\label{lem:3.5}
Let \(t\in\mathcal{T}_*\cap\mathbb N\). There exist constants
\(\epsilon_t>0\) and \(\delta_t>0\) such that, for every tail index \(k\),
every \(x\in[0,t]\), every \(R>0\), and every finite set
\(F\subset\mathbb Z\), one can find two distinct integers
\(r_1,r_2\notin F\) with \(|r_i|>R\) such that
\[
|\widehat{\nu}_{>k}(x+tr_i+y)|\ge \epsilon_t,
\qquad |y|<\delta_t,\quad i=1,2.
\]
\end{lemma}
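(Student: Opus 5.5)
Lemma \ref{lem:3.5} follows from Corollary \ref{cor:3.3} by a translation trick. Corollary \ref{cor:3.3} places the two good integers in the bounded window $\{0,1,\dots,K_t\}$, uniformly in $x\in[0,t]$ and in the tail index. Here we want them far from the origin and outside a prescribed finite set $F$. The plan is to shift $x$ by a large multiple of $t$ that is compatible with the structure of the tail measure $\nu_{>k}$, and then apply Corollary \ref{cor:3.3} to the shifted point. We take $\epsilon_t,\delta_t$ equal to the constants $\varepsilon_t,\delta_t$ of Corollary \ref{cor:3.3}.

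The key observation is that $\widehat{\nu}_{>k}$ is only quasi-periodic, so we factor it. Using \eqref{eq:2.2} and the definition of $\nu_{>k}$,
\[
\widehat{\nu}_{>k}(\xi)=\widehat{\delta_{q_{k+1}^{-1}D_{k+1}}}(\xi)\cdots\widehat{\delta_{Q_{k,k+p}^{-1}D_{k,k+p}}}(\xi)\cdot\widehat{\nu}_{>k+p}\bigl(Q_{k,k+p}^{-1}\xi\bigr),
\]
or more compactly $\widehat{\nu}_{>k}(\xi)=\widehat{\delta_{Q_{k,k+p}^{-1}D_{k,k+p}}}(\xi)\,\widehat{\nu}_{>k+p}(\xi/Q_{k,k+p})$. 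The finite factor is $Q_{k,k+p}$-periodic, since $D_{k,k+p}\subset\mathbb Z$.

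Fix $k,x,R,F$. Choose $p$ so large that $Q:=Q_{k,k+p}$ satisfies $tQ>R+\max_{f\in F}|f|+K_t\,t$; this is possible since every $q_j\ge2$. We look for candidates of the form $r=Q r'+j$ with $j\in\{0,\dots,K_t\}$ and $r'\neq0$. Periodicity gives
\[
\widehat{\nu}_{>k}(x+tr+y)=\widehat{\delta_{Q^{-1}D_{k,k+p}}}(x+tj+y)\,\widehat{\nu}_{>k+p}\Bigl(\tfrac{x+tj+y}{Q}+tr'\Bigr).
\]
Hence $|\widehat{\nu}_{>k}(x+tr+y)|=|\widehat{\nu}_{>k}(x+tj+y)|\cdot\bigl|\widehat{\nu}_{>k+p}\bigl(\tfrac{x+tj+y}{Q}+tr'\bigr)\bigr|\big/\bigl|\widehat{\nu}_{>k+p}\bigl(\tfrac{x+tj+y}{Q}\bigr)\bigr|$, which is awkward because of the division. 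So instead we apply Corollary \ref{cor:3.3} directly at a deeper level, as follows.

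\textbf{Step 1: the finite factor.} Apply Corollary \ref{cor:3.3} at tail index $k$ to $x$. This yields $j_1\neq j_2$ in $\{0,\dots,K_t\}$ with $|\widehat{\nu}_{>k}(x+tj_i+y)|\ge\varepsilon_t$. Since $|\widehat{\nu}_{>k+p}|\le1$, this forces $|\widehat{\delta_{Q^{-1}D_{k,k+p}}}(x+tj_i+y)|\ge\varepsilon_t$ for $|y|<\delta_t$.

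\textbf{Step 2: the tail factor.} For the factor $\widehat{\nu}_{>k+p}(z+tr')$ with $z=(x+tj_i+y)/Q$, we need a lower bound for some $r'$ of large modulus. Apply Corollary \ref{cor:3.3} at tail index $k+p$, at the point $x'=(x+tj_i)/Q$ reduced mod $t$ into $[0,t]$ (note $x'$ is small when $Q$ is large). This gives $r'\in\{0,\dots,K_t\}$, shifted by the reduction integer, with $|\widehat{\nu}_{>k+p}(x'+tr'+y')|\ge\varepsilon_t$ for $|y'|<\delta_t$. Since $|y/Q|<\delta_t$, this covers the perturbation.

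\textbf{Step 3: the main obstacle.} We must ensure $r'\neq0$, so that $|r|=|Qr'+j_i|>R$ and $r\notin F$. Corollary \ref{cor:3.3} gives two choices of $r'$, and at most one of them can be $0$, so taking the other gives $r'\ge1$. Then $r_i=Qr'+j_i$, $i=1,2$, are distinct because $j_1\neq j_2$ and $Q>K_t$. They satisfy $|r_i|\ge Q-K_t>R+\max|F|$, and the bound $|\widehat{\nu}_{>k}(x+tr_i+y)|\ge\varepsilon_t^2$ holds.

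We therefore set $\epsilon_t=\varepsilon_t^2$ and $\delta_t$ as in Corollary \ref{cor:3.3}; these constants are independent of $k,x,R,F$. The delicate points are the mod-$t$ reduction of $x'$ and the bookkeeping that keeps the perturbation $y/Q$ within $\delta_t$.
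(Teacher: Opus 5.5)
Your proposal is correct and follows essentially the same route as the paper: Corollary~\ref{cor:3.3} at tail index $k$ gives the digits $j_1\neq j_2$, and the $Q$-periodicity of the finite block factor transfers the bound. Corollary~\ref{cor:3.3} at tail index $k+p$, applied at $x'=(x+tj_i)/Q$, then supplies a nonzero $r_i'$, giving $r_i=Qr_i'+j_i$ and $\epsilon_t=\varepsilon_t^2$; here $x'$ already lies in $[0,t]$ once $Q\ge K_t+1$, so no mod-$t$ reduction is needed.

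The one slip is your size condition $tQ>R+\max_{f\in F}|f|+K_t t$, which for $t\ge2$ does not give the claimed $Q-K_t>R+\max_{f\in F}|f|$. Simply require $Q>R+\max_{f\in F}|f|+K_t+1$ as the paper does, which is possible since $Q_{k,k+p}\to\infty$.
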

\begin{proof}
By Corollary \ref{cor:3.3}, choose two distinct integers
\[
a_1,a_2\in\{0,1,\ldots,K_t\}
\]
such that
\[
 |\widehat{\nu}_{>k}(x+ta_j+y)|\ge\varepsilon_t,
 \qquad |y|<\delta_t,
 \quad j=1,2.
\]
Set
\[
Q_\ell=N_{\omega_{k+1}}^{n_{k+1}}\cdots N_{\omega_{k+\ell}}^{n_{k+\ell}}.
\]
Choose $\ell$ so large that
\[
Q_\ell>K_t+1
\]
and
\[
Q_\ell>\max\{R,\max_{u\in F}|u|\}+K_t+1.
\]
Decompose
\[
\widehat{\nu}_{>k}(\xi)=\widehat{\nu}_{k,k+\ell}(\xi)\widehat{\nu}_{>k+\ell}(Q_\ell^{-1}\xi).
\]
Since $|\widehat{\nu}_{>k+\ell}|\le1$, we have
\[
|\widehat{\nu}_{k,k+\ell}(x+ta_j+y)|\ge\varepsilon_t,
\qquad |y|<\delta_t,\quad j=1,2.
\]
Let
\[
x_j=Q_\ell^{-1}(x+ta_j),\qquad j=1,2.
\]
Since $x\in[0,t]$ and $0\le a_j\le K_t$, the choice $Q_\ell>K_t+1$ gives
\[
0\le x_j=Q_\ell^{-1}(x+ta_j)\le Q_\ell^{-1}t(1+K_t)<t.
\]
Hence $x_j\in[0,t]$ for $j=1,2$.
Applying Corollary \ref{cor:3.3} to the tail index $k+\ell$ and to $x_j$, we choose a nonzero integer $h_j\in\{1,\ldots,K_t\}$ such that
\[
|\widehat{\nu}_{>k+\ell}(x_j+th_j+Q_\ell^{-1}y)|\ge\varepsilon_t,
\qquad |y|<\delta_t,\quad j=1,2.
\]
Define
\[
r_j=a_j+Q_\ell h_j,
\qquad j=1,2.
\]
Then $r_j\notin F$ and $|r_j|>R$ by the choice of $Q_\ell$. Moreover, $r_1\ne r_2$: if not, then
\[
Q_\ell(h_1-h_2)=a_2-a_1,
\]
which is impossible because $Q_\ell>K_t\ge |a_2-a_1|$, unless $h_1=h_2$ and then $a_1=a_2$, a contradiction. The measure $\nu_{k,k+\ell}$ is supported on $Q_\ell^{-1}\mathbb Z$, and hence $\widehat{\nu}_{k,k+\ell}$ is $Q_\ell$-periodic. Thus
\[
|\widehat{\nu}_{k,k+\ell}(x+tr_j+y)|
=
|\widehat{\nu}_{k,k+\ell}(x+ta_j+y)|
\ge\varepsilon_t.
\]
Also,
\[
Q_\ell^{-1}(x+tr_j+y)=x_j+th_j+Q_\ell^{-1}y.
\]
Therefore
\[
 |\widehat{\nu}_{>k}(x+tr_j+y)|\ge\varepsilon_t^2,
 \qquad |y|<\delta_t,
 \quad j=1,2.
\]
Taking $\epsilon_t=\varepsilon_t^2$ proves the lemma.
\end{proof}

\section{Construction of the set}
In this section, our goal is to construct a set such that $\mathcal{T}_*$ becomes a spectral eigenvalue set of $\mu$. Since each level of the measure $\mu$ possesses the Hadamard triple property, a natural construction is as follows:
$$\Lambda=\bigcup_{n=1}^{\infty}E_{0,n}, \qquad E_{0,n}=E_1+q_1E_2+\cdots +q_1q_2 \cdots q_{n-1}E_n.$$
However, such a construction is too restrictive. Therefore, based on the favorable property that it remains a Hadamard triple after taking modulo, we adjust the modulo operation on certain layers, giving us more choices in constructing the set.

Let
$$\mathcal{T}_*\cap\mathbb{N}=\{p_1,p_2,\dots\}.$$
and choose a traversal sequence
\begin{equation}\label{eq:4.1}
 a_1,a_2,a_3,\dots=p_1,p_1,p_2,p_1,p_2,p_3,\dots,
\end{equation}
so that each $p_i$ occurs infinitely many times.\\

\textbf{Step 1: Construct the first layer.} Write $n_0=0$.  Taking $n_1 \in \mathbb{N}^{+}$ such that $\left\lvert M_{n_1}^{-1}a_1 \xi \right\rvert \leq \delta_{a_1}$ for all $|\xi|\le 1/3$. Let $C_1\subset E_{n_0,n_1}$ be any finite set containing $0$. We take $E_{n_0,n_1}$ as representatives modulo $M_{n_1}$. Define the set
$$\overline{\Lambda}_{n_1}=\{c_1+M_{n_1}k(c_1,n_1):c_1 \in C_1\},$$
where $k(c_1,n_1)$ is the integer obtained by applying Corollary \ref{cor:3.3} to point $M_{n_1}^{-1}a_1c_1$ for $c_1 \in C_1$. And define the set 
$$\widetilde{\Lambda}_{n_1}=\{e_1+M_{n_1}k(e_1,n_1):e_1 \in E_{n_0,n_1} \backslash C_1\},$$
where $k(e_1,n_1)$ is the integer obtained by applying Lemma \ref{lem:3.5} to point $M_{n_1}^{-1}a_1e_1$ for $e_1 \in E_{n_0,n_1} \backslash C_1$. And note that, according to Lemma \ref{lem:3.5}, the sparse elements can be chosen successively so that, after enumeration as $\sigma_1,\sigma_2,\ldots$, they satisfy
\[
|\sigma_{N+1}|>2|\sigma_N|+1.
\]

\textbf{Step 2: Construct the general layer.} Assume that $\overline{\Lambda}_{n_1}$ and $\widetilde{\Lambda}_{n_1}$ of the $n_{r-1}$-th layer have already been constructed. we now proceed to construct the $n_{r}$-th layer. Taking $n_{r} \in \mathbb{N}^{+}$ such that 
$$\left\lvert M_{n_r}^{-1}(a_{r} \lambda+\xi) \right\rvert \leq \delta_{a_{r}} \text{ for any } \lambda \in \Lambda_{n_{r-1}}:= \overline{\Lambda}_{n_{r-1}} \cup \widetilde{\Lambda}_{n_{r-1}} \text{ and } |\xi|\leq 1/3.$$
Let $C_r\subset E_{n_{r-1},n_r}$ be any finite set containing $0$.
We take \(E_{n_{r-1},n_r}\) as representatives modulo
\(q_{n_{r-1}+1}\cdots q_{n_r}\). Define the set
$$\overline{\Lambda}_{n_r}=\overline{\Lambda}_{n_{r-1}}+M_{n_{r-1}}\{c_r+q_{n_{r-1}+1}q_{n_{r-1}+2}\cdots q_{n_r}k(c_r,n_r):c_r \in C_r\}$$
where $k(c_r,n_r)$ is the integer obtained by applying Corollary \ref{cor:3.3} to point $M_{n_{r-1}}M_{n_r}^{-1}a_rc_r$ for $c_r \in C_r$. And define the set
\begin{align*}
\widetilde{\Lambda}_{n_r}=&\left(\overline{\Lambda}_{n_{r-1}}+M_{n_{r-1}}\{e_r+q_{n_{r-1}+1}q_{n_{r-1}+2}\cdots q_{n_r}k(e_r,n_r):e_r \in E_{n_{r-1},n_r} \backslash C_r\}\right)\\
    \cup & \left(\widetilde{\Lambda}_{n_{r-1}}+M_{n_{r-1}}\{e_r'+q_{n_{r-1}+1}q_{n_{r-1}+2}\cdots q_{n_r}k(e_r',n_r):e_r' \in E_{n_{r-1},n_r} \}\right)
\end{align*}
where \(k(e_r,n_r)\) is the integer obtained by applying Lemma \ref{lem:3.5}
to point \(M_{n_{r-1}}M_{n_r}^{-1}a_re_r\) for
\(e_r\in E_{n_{r-1},n_r}\backslash C_r\), and similarly to point
\(M_{n_{r-1}}M_{n_r}^{-1}a_re_r'\) for
\(e_r'\in E_{n_{r-1},n_r}\). And note that, according to Lemma \ref{lem:3.5}, the new sparse elements are chosen so that the enumeration $\sigma_1,\sigma_2,\ldots$ continues to satisfy
\[
|\sigma_{N+1}|>2|\sigma_N|+1.
\]

After such an inductive construction, we always choose the zero digit with carry
\[
k(0,n_r)=0.
\]
Hence
\[
\overline{\Lambda}_{n_{r-1}}\subset \overline{\Lambda}_{n_r}
\text{ and }
\widetilde{\Lambda}_{n_{r-1}}\subset\widetilde{\Lambda}_{n_r}.
\]
Define
$$\Lambda_{n_r}=\overline{\Lambda}_{n_r} \cup\widetilde{\Lambda}_{n_r},$$
we know that $\Lambda_{n_{r-1}}\subset\Lambda_{n_r}$ for any $r \in \mathbb{N}^+$. Write $\Lambda=\bigcup_{r=1}^{\infty} \Lambda_{n_r}$.\\

\textbf{Step 3: $ t\Lambda_{n_r}$ is a spectrum of $\mu_{n_r}$ for every $t\in\mathcal{T}_*$.}
According to the construction of the first layer, for any two distinct $\lambda, \lambda' \in t\Lambda_{n_1}$, we find that
\begin{equation}\label{eq:4.2}
    \lambda-\lambda' \in tE_{n_0,n_1}-tE_{n_0,n_1}+tQ_{n_0,n_1} \mathbb{Z}.
\end{equation}
Moreover, we know that
\begin{equation}\label{eq:4.3}
\widehat{\mu}_{n_1}=\widehat{\delta_{M_1^{-1}D_1}}\widehat{\delta_{M_2^{-1}D_2}}\cdots \widehat{\delta_{M_{n_1}^{-1}D_{n_1}}}=\widehat{\delta_{Q_{n_0,n_1}^{-1}D_{n_0,n_1}}}
\end{equation}
Combining \eqref{eq:4.2}, \eqref{eq:4.3}, Lemma \ref{lem:2.1} and 
Lemma \ref{lem:2.2}, we obtain that $ t\Lambda_{n_1}$ is a spectrum of $\mu_{n_1}$ for every $t\in\mathcal{T}_*$. Assuming that this holds at $r-1$-th layer; we now consider the 
$r$-th layer. Let  $t\lambda_r, t\lambda_r' \in t\Lambda_{n_r}$ be distinct. We distinguish three cases.\\

(1) $\lambda_r, \lambda_r' \in \overline{\Lambda}_{n_r}$. Then there exist two elements $\lambda_{r-1}, \lambda_{r-1}' \in \overline{\Lambda}_{n_{r-1}}$ such that
$$t\lambda_r=t\lambda_{r-1}+tM_{n_{r-1}}c_r+tM_{n_{r}}k(c_r,n_r)$$
and 
$$t\lambda_r'=t\lambda_{r-1}'+tM_{n_{r-1}}c_r'+tM_{n_{r}}k(c_r',n_r),$$
where $c_r,c_r' \in E_{n_{r-1},n_{r}}$. If $\lambda_{r-1}\neq \lambda_{r-1}'$, then we have
$$t\lambda_r-t\lambda_r'\in t\lambda_{r-1}-t\lambda_{r-1}'+M_{n_{r-1}}\mathbb{Z} \subset \mathcal{Z}(\widehat{\mu_{n_{r-1}}}) \subset \mathcal{Z}(\widehat{\mu_{n_{r}}}).$$
If $\lambda_{r-1}= \lambda_{r-1}'$, then $c_r \neq c_r'$ and 
$$t\lambda_r-t\lambda_r'\in M_{n_{r-1}}(tc_r-tc_r')+M_{n_{r}}\mathbb{Z} \subset M_{n_{r-1}}(tE_{n_{r-1},n_{r}}-tE_{n_{r-1},n_{r}}+Q_{n_{r-1},n_{r}}\mathbb{Z})$$
According to Lemma \ref{lem:2.1} and 
Lemma \ref{lem:2.2}, we obtain that
\[
t\lambda_r-t\lambda_r'
\in
M_{n_{r-1}}\mathcal Z
\left(
\widehat{\delta_{Q_{n_{r-1},n_r}^{-1}D_{n_{r-1},n_r}}}
\right)
\subset
\mathcal Z(\widehat{\mu_{n_r}}).
\]

(2) $\lambda_r \in \overline{\Lambda}_{n_r}$, $\lambda_r' \in \widetilde{\Lambda}_{n_r}$. Then there exist two elements $\lambda_{r-1} \in \overline{\Lambda}_{n_{r-1}}$ and $c_r\in C_r$ such that
$$t\lambda_r=t\lambda_{r-1}+tM_{n_{r-1}}c_r+tM_{n_{r}}k(c_r,n_r).$$
And there exist two elements $\lambda_{r-1}' \in \overline{\Lambda}_{n_{r-1}}$, $e_r\in E_{n_{r-1},n_r} \backslash C_r$ or $\lambda_{r-1}' \in \widetilde{\Lambda}_{n_{r-1}}$, $e_r\in E_{n_{r-1},n_r} $ such that
$$t\lambda_r'=t\lambda_{r-1}'+tM_{n_{r-1}}e_r+tM_{n_{r}}k(e_r,n_r).$$
Due to $\lambda_r\neq \lambda_r'$, we sure that $(\lambda_{r-1},c_r) \neq (\lambda_{r-1}',e_r)$. If $\lambda_{r-1}\neq \lambda_{r-1}'$, then 
$$t\lambda_r-t\lambda_r'\in t\lambda_{r-1}-t\lambda_{r-1}'+M_{n_{r-1}}\mathbb{Z} \subset \mathcal{Z}(\widehat{\mu_{n_{r-1}}}) \subset \mathcal{Z}(\widehat{\mu_{n_{r}}}).$$
If \(\lambda_{r-1}= \lambda_{r-1}'\), then \(c_r \neq e_r\) and
$$t\lambda_r-t\lambda_r'\in M_{n_{r-1}}(tc_r-te_r)+M_{n_{r}}\mathbb{Z} \subset M_{n_{r-1}}(tE_{n_{r-1},n_{r}}-tE_{n_{r-1},n_{r}}+Q_{n_{r-1},n_{r}}\mathbb{Z})$$
Again according to Lemma \ref{lem:2.1} and 
Lemma \ref{lem:2.2}, we obtain that
$$t\lambda_r-t\lambda_r'\in M_{n_{r-1}}\mathcal{Z}(\widehat{\delta_{Q_{n_{r-1},n_{r}}^{-1}D_{n_{r-1},n_{r}}}}) =\mathcal{Z}(\widehat{\mu_{n_{r}}}).$$

(3) $\lambda_r, \lambda_r' \in \widetilde{\Lambda}_{n_r}$. If the parent frequencies are distinct, the induction hypothesis applies. If the parent frequencies coincide, then the block digits are distinct and the block Hadamard triple in Lemma \ref{lem:2.2} applies. Thus this case is handled in the same way as the previous two cases.\\

The above three cases all show that \(t\Lambda_{n_r}\) is an orthogonal set for the measure \(\mu_{n_r}\). Since $ t\Lambda_{n_{r-1}}$ is a spectrum of $\mu_{n_{r-1}}$, then 
$$\# \Lambda_{n_{r-1}}=\#\supp(\mu_{n_{r-1}})=\#D_1\#D_2 \cdots \#D_{n_{r-1}}.$$
The map
\[
(\lambda,e)\mapsto \lambda+M_{n_{r-1}}e+M_{n_r}k(e,n_r)
\]
is injective. Indeed, if two such values coincide, reducing modulo
\(M_{n_{r-1}}\) gives congruent parent frequencies modulo \(M_{n_{r-1}}\). By the induction hypothesis, the finite spectrum has distinct residues modulo \(M_{n_{r-1}}\), so the parent frequencies are equal. After dividing by
\(M_{n_{r-1}}\) and reducing modulo
\(q_{n_{r-1}+1}\cdots q_{n_r}\), the corresponding block digits are equal.
Furthermore, by construction, we see that
$$\#\Lambda_{n_r}=\#\Lambda_{n_{r-1}}\#E_{n_{r-1},n_r}=\#D_1\#D_2 \cdots \#D_{n_{r}}=\#\supp(\mu_{n_{r}}).$$
Hence we can claim that $t\Lambda_{n_r}$ is a spectrum of $\mu_{n_{r}}$.\\

\textbf{Step 4: For any $r\in \mathbb{N}$, $\lambda_r \in \Lambda_{n_r}$ and $|y|<\delta_{a_r}$,  $|\widehat{\nu}_{>{n_r}}(M_{n_r}^{-1}(a_r\lambda_r+y))|\ge \varepsilon_{a_r}$.} For every $\lambda_1 \in \Lambda_{n_1}$. If $\lambda_1\in \overline{\Lambda}_{n_1}$, then $$\lambda_1=c_1+M_{n_1}k(c_1,n_1).$$ 
The selection rule of $k(c_1,n_1)$ together with Corollary  \ref{cor:3.3} yields that for $|y|<\delta_{a_1}$,
$$|\widehat{\nu}_{>{n_1}}(M_{n_1}^{-1}(a_1\lambda_1+y))|=|\widehat{\nu}_{>{n_1}}(M_{n_1}^{-1}a_1c_1+a_1k(c_1,n_1)+M_{n_1}^{-1}y)|\ge\varepsilon_{a_1}.$$
If $\lambda_1\in \widetilde{\Lambda}_{n_1}$, then $$\lambda_1=e_1+M_{n_1}k(e_1,n_1).$$ 
From the selection rule of $k(e_1,n_1)$ and Lemma \ref{lem:3.5}, it follows that  for $|y|<\delta_{a_1}$, 
$$|\widehat{\nu}_{>{n_1}}(M_{n_1}^{-1}(a_1\lambda_1+y))|=|\widehat{\nu}_{>{n_1}}(M_{n_1}^{-1}a_1e_1+a_1k(e_1,n_1)+M_{n_1}^{-1}y)|\ge\epsilon_{a_1} .$$
Assuming that this holds at $r-1$-th layer; we now consider the 
$r$-th layer. For any $\lambda_r \in \Lambda_{n_r}$, if \(\lambda_r\in\overline{\Lambda}_{n_r}\), then there exists
\(\lambda_{r-1}\in\overline{\Lambda}_{n_{r-1}}\) and \(c_r\in C_r\) such that
$$\lambda_r=\lambda_{r-1}+M_{n_{r-1}}c_r+M_{n_{r}}k(c_r,n_r).$$
According to $\left\lvert M_{n_r}^{-1}(a_{r} \lambda_{r-1} +y)\right\rvert \leq \delta_{a_{r}}$, the selection rule of $k(c_r,n_r)$ and Corollary \ref{cor:3.3}, we can obtain that
$$|\widehat{\nu}_{>{n_r}}(M_{n_r}^{-1}(a_r\lambda_r+y))|=|\widehat{\nu}_{>{n_r}}(M_{n_r}^{-1}(a_r\lambda_{r-1}+y)+M_{n_{r-1}}M_{n_r}^{-1}c_ra_r+a_rk(c_r,n_r))| \geq \varepsilon_{a_r}.$$
If $\lambda_r \in \widetilde{\Lambda}_{n_r}$, there exist two elements $\lambda_{r-1} \in \overline{\Lambda}_{n_{r-1}}$, $e_r\in E_{n_{r-1},n_r} \backslash C_r$ or $\lambda_{r-1} \in \widetilde{\Lambda}_{n_{r-1}}$, $e_r\in E_{n_{r-1},n_r} $ such that
$$\lambda_r=\lambda_{r-1}+M_{n_{r-1}}e_r+M_{n_{r}}k(e_r,n_r).$$
According to $\left\lvert M_{n_r}^{-1}(a_{r} \lambda_{r-1} +y)\right\rvert \leq \delta_{a_{r}}$, the selection rule of $k(e_r,n_r)$ and Lemma \ref{lem:3.5}, we deduce that
$$|\widehat{\nu}_{>{n_r}}(M_{n_r}^{-1}(a_r\lambda_r+y))|=|\widehat{\nu}_{>{n_r}}(M_{n_r}^{-1}(a_r\lambda_{r-1}+y)+M_{n_{r-1}}M_{n_r}^{-1}e_ra_r+a_rk(e_r,n_r))| \geq \varepsilon_{a_r}.$$\\

\begin{proposition}\label{prop: 4.1}
For every $t\in\mathcal{T}_*$, the set $t\Lambda$ is a spectrum of $\mu$.   
\end{proposition}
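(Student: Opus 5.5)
We will apply Theorem \ref{thm:2.5} to $\mu$ with the spectra $t\Lambda_{n_r}$. Fix $t\in\mathcal{T}_*$. First we reduce to $t\in\mathcal{T}_*\cap\mathbb{N}$. Since $(N_j,B_j,tL_j)$ is a Hadamard triple if and only if $(N_j,B_j,-tL_j)$ is one (the matrix is just complex conjugated), $-t\in\mathcal{T}_*$ whenever $t\in\mathcal{T}_*$. Moreover $\mu$ is a real measure, so $|\widehat{\mu}(-\xi)|=|\widehat{\mu}(\xi)|$, and $-t\Lambda$ is a spectrum exactly when $t\Lambda$ is. Hence we may take $t=p_i$ for some $i$.

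Because the traversal sequence \eqref{eq:4.1} contains $p_i$ infinitely often, there is a strictly increasing subsequence $r_1<r_2<\cdots$ with $a_{r_j}=t$ for all $j$. Set $\alpha_j=n_{r_j}$. By Step 3 each $t\Lambda_{\alpha_j}$ is a spectrum of $\mu_{\alpha_j}$. Since $k(0,n_r)=0$, we have $0\in\Lambda_{n_1}$ and the sets $\Lambda_{n_r}$ are nested, so $0\in t\Lambda_{\alpha_j}\subset t\Lambda_{\alpha_{j+1}}$. Also $\bigcup_j t\Lambda_{\alpha_j}=t\Lambda$, because the full sequence $\Lambda_{n_r}$ is nested and the subsequence is cofinal.

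It remains to verify the uniform lower bound in Theorem \ref{thm:2.5}. We take $\delta=\min\{1/3,\delta_t\}$, with $\delta_t$ the smaller of the constants from Corollary \ref{cor:3.3} and Lemma \ref{lem:3.5}. For $\lambda\in\Lambda_{\alpha_j}$ and $|\xi|\le\delta$, \eqref{eq:2.2} gives
\[
\widehat{\mu}_{>\alpha_j}(t\lambda+\xi)=\widehat{\nu}_{>n_{r_j}}\bigl(M_{n_{r_j}}^{-1}(a_{r_j}\lambda+\xi)\bigr).
\]
Step 4, with $y=\xi$ and $a_{r_j}=t$, bounds this below in modulus by $\min\{\varepsilon_t,\epsilon_t\}$. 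The bound covers both the dense part $\overline{\Lambda}$ and the sparse part $\widetilde{\Lambda}$, and the constants depend only on $t$, not on $j$. The infimum is therefore at least $\min\{\varepsilon_t,\epsilon_t\}^2>0$, and Theorem \ref{thm:2.5} gives that $t\Lambda$ is a spectrum of $\mu$.

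The main point to check is that Step 4 really holds uniformly with $y$ ranging over the full window $|y|<\delta_{a_r}$. This needs the choice of $n_r$ in Step 2: it guarantees $|M_{n_r}^{-1}(a_r\lambda_{r-1}+y)|\le\delta_{a_r}$, so the perturbation stays inside the window where Corollary \ref{cor:3.3} and Lemma \ref{lem:3.5} apply. It also needs the points $M_{n_{r-1}}M_{n_r}^{-1}a_rc_r$ to be reduced into $[0,a_r]$ by the integer shift absorbed into $k$. The requirement was only stated for $|\xi|\le 1/3$, so we would also shrink $\delta$ to $\min\{1/3,\delta_t\}$ and accept the constant $\varepsilon_t$ in place of $\varepsilon_t^{(0)}$, which makes the argument consistent. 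A second, minor point is that Theorem \ref{thm:2.5} is stated for the measure $\mu$ with spectra of $\mu_{\alpha_n}$; we apply it directly to the integer frequencies $t\Lambda_{\alpha_j}$.
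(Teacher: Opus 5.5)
Your proposal is correct and follows the paper's own proof: reduce to $t>0$ by complex conjugation, pass to the levels $n_r$ with $a_r=t$, use Step 3 for the finite spectra and Step 4 with \eqref{eq:2.2} for the uniform tail bound, then apply Theorem \ref{thm:2.5}. Your extra bookkeeping, namely using $\min\{\varepsilon_t,\epsilon_t\}$ because Lemma \ref{lem:3.5} only yields $\varepsilon_t^2$ and capping $\delta$ at $1/3$ to match the choice of $n_r$, tidies constants that the paper's proof glosses over.
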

\begin{proof}
    It suffices to treat $t\in\mathcal{T}_*\cap\mathbb{N}$, since negative multipliers follow by complex conjugation.
Fix $t\in\mathcal{T}_*\cap\mathbb{N}$. By the traversal property \eqref{eq:4.1}, there exists an increasing subsequence $\{n_{m_j}\}$ such that $a_{m_j}=t$.  For each $j$, by \textbf{step 3}, $t\Lambda_{n_{m_j}}$ is a spectrum of the finite convolution $\mu_{n_{m_j}}$. 

By \eqref{eq:2.2} and \textbf{step 4}, we know that there exist constants $\varepsilon_{t}>0$ and $\delta_{t}>0$ such that for any $j\in \mathbb{N}$,
$$|\widehat{\mu}_{>{n_{m_j}}}(t\lambda+y)|=|\widehat{\nu}_{>{n_{m_j}}}(M_{n_{m_j}}^{-1}(t\lambda+y))| \ge \varepsilon_{t} $$
for $\lambda \in \Lambda_{n_{m_j}}$ and $|y|<\delta_{t}$. Then by Theorem \ref{thm:2.5}, $t\Lambda$ is  a spectrum of $\mu$.
\end{proof}

We choose the block sizes and the principal-cardinality sequence.

\begin{lemma}\label{lem:4.2}
Assume that $\kappa_\omega$ as in \eqref{eq:1.2}.  For every $s\in[0,\kappa_\omega]$ there are integers
\[
 0=m_0<m_1<m_2<\cdots
\]
and $0\in C_r\subset E_{m_{r-1},m_r}$ such that
\begin{equation}\label{eq:4.4}
 \limsup_{R\to\infty}\frac{\sum_{r=1}^R\log \#C_r}{\log M_{m_R}}=s.
\end{equation}
\end{lemma}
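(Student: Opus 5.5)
Here $\#E_{a,b}=\#D_{a,b}=\prod_{k=a+1}^b\#D_k$, because $(Q_{a,b},D_{a,b},E_{a,b})$ is a Hadamard triple by Lemma \ref{lem:2.2}. Also $M_{m_R}=\prod_{k\le m_R}q_k$. Write $A_n=\sum_{k\le n}\log\#D_k$ and $S_n=\log M_n=\sum_{k\le n}\log q_k$. Then $S_n\to\infty$, the increments satisfy $\log q_k\le c\,\log\#D_k$-independent bound $\log q_k\le n_k\log\max_j N_j$, and $\kappa_\omega=\limsup A_n/S_n$.

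The idea is to choose the cardinalities $c_r:=\#C_r$ freely in $\{1,\dots,\#E_{m_{r-1},m_r}\}$; any such $C_r$ containing $0$ is admissible. We then tune $\sum_{r\le R}\log c_r$ against $S_{m_R}$. The case $s=0$ is trivial: take $m_r=r$ and $C_r=\{0\}$, so the numerator of \eqref{eq:4.4} is $0$. So assume $s>0$ and fix an increasing sequence $n^{(1)}<n^{(2)}<\cdots$ with $A_{n^{(i)}}/S_{n^{(i)}}\to\kappa_\omega$.

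The plan is a greedy construction keeping the running sum $P_R:=\sum_{r\le R}\log c_r$ always at most $sS_{m_R}+O(1)$, and nearly equal to it along a subsequence. Let $m_r$ be a very sparse subsequence of the $n^{(i)}$'s, chosen inductively so that $S_{m_{r-1}}/S_{m_r}\to0$. At step $r$, set $T_r:=\lfloor s S_{m_r}\rfloor-P_{r-1}$. Choose $c_r$ as large as possible subject to $c_r\le \#E_{m_{r-1},m_r}$ and $\log c_r\le T_r$ (take $c_r=1$ if $T_r<0$).

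The upper bound $\limsup P_R/S_{m_R}\le s$ follows because, by induction, $P_R\le sS_{m_R}$ always holds.

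For the lower bound, we need $\log c_r\ge \min\{T_r,\ A_{m_r}-A_{m_{r-1}}\}-\log 2$. This uses that integers are dense on the log scale once $c_r\ge1$. The cost of rounding to an integer is at most $\log 2$ per step, which we must control. Along the chosen subsequence, $A_{m_r}-A_{m_{r-1}}\ge(\kappa_\omega-o(1))S_{m_r}-S_{m_{r-1}}\ge s S_{m_r}-o(S_{m_r})$ when $s<\kappa_\omega$. The sparseness condition $S_{m_{r-1}}=o(S_{m_r})$ handles the subtracted term. Hence $P_r\ge sS_{m_r}-\log2-o(S_{m_r})$, giving equality in \eqref{eq:4.4}.

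For $s=\kappa_\omega$ the same computation gives $P_r\ge P_{r-1}+(A_{m_r}-A_{m_{r-1}})\wedge T_r-\log2$. Since both $A_{m_r}/S_{m_r}$ and $s$ tend to the same value, the ratio still tends to $s$. The accumulated rounding loss is at most $r\log 2$, and this is $o(S_{m_r})$ as long as $S_{m_r}\ge r^2$, which sparseness guarantees.

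The main obstacle is the case where $\#E_{m_{r-1},m_r}$ is too small to reach the target. This happens when $A$ grows slower than $sS$ on the block, e.g.\ on stretches where the lim sup is not attained. We avoid this by placing every $m_r$ only at indices $n^{(i)}$ of near-maximal density. If $\kappa_\omega=0$ then $s=0$, which is the trivial case. The remaining care is bookkeeping, to confirm that the nested choices ($0\in C_r$, block endpoints from the subsequence) are compatible with the requirement $m_r$ increasing. They are, since the construction in Section 4 only needs $m_r=n_r$ to be beyond the threshold dictated by $\delta_{a_r}$, and we may always pass to a later $n^{(i)}$.
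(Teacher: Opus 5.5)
Your argument is correct, but it is organized differently from the paper's. The paper works block by block. For $0<s<\kappa_\omega$ it only requires that each block have density $\left(\sum_{k=m_{r-1}+1}^{m_r}\log\#D_k\right)/\left(\sum_{k=m_{r-1}+1}^{m_r}\log q_k\right)>s$, which is achievable for arbitrarily large $m_r$ because the initial segment is fixed. It then takes $\#C_r=\lfloor Q_{m_{r-1},m_r}^{s}\rfloor$ and makes the per-block rounding errors summable by also imposing $2^{-s(m_r-m_{r-1})}\le 2^{-r}$. The endpoint $s=\kappa_\omega$ is handled separately with full digit sets $C_r=E_{m_{r-1},m_r}$.

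You instead aim at the cumulative target $\lfloor sS_{m_r}\rfloor$, which corrects rounding automatically. One checks that $T_r\ge 0$ always, and that $P_r\ge\lfloor sS_{m_r}\rfloor-\log 2$ whenever the cap $c_r\le\#E_{m_{r-1},m_r}$ is inactive, so no summability condition is needed. You also place every $m_r$ on a sparse limsup-realizing subsequence, which makes the cap harmless. This treats $s<\kappa_\omega$ and $s=\kappa_\omega$ uniformly. It also makes explicit a point the paper's case (ii) leaves implicit: the $m_r$ must hit near-maximal-density indices for the full choice to realize $\kappa_\omega$.

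What the paper's route buys is a weaker demand on $m_r$: density above $s$ on the block, rather than near-maximal cumulative density plus $S_{m_{r-1}}=o(S_{m_r})$. It also gives sharp local control $\log\#C_r=s\log Q_{m_{r-1},m_r}+O(2^{-r})$ in every block. In your scheme the block sizes are balanced only up to $O(1)$, and only eventually when $s<\kappa_\omega$.

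Minor points:
\begin{itemize}
\item Your $r\log 2$ accumulation bound for $s=\kappa_\omega$ is more pessimistic than needed, since in the capped branch simply $P_r\ge A_{m_r}-A_{m_{r-1}}\ge A_{m_r}-S_{m_{r-1}}$.
\item You use $A_n\le S_n$ (i.e.\ $\#D_k\le q_k$) without stating it.
\item The sentence about a ``$\log\#D_k$-independent bound'' on $\log q_k$ is garbled and can be deleted, since nothing depends on it.
\end{itemize}
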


\begin{proof}
We consider the following three cases: $s=0$, $s=\kappa_\omega$ and $0<s <\kappa_\omega$.

(i) $s=0$. Taking 
$C_r=\{0\}$, then $\sum_{r=1}^R\log \#C_r=0$.

(ii) $s=\kappa_\omega$. Taking 
$C_r=E_{m_{r-1},m_r}$, then
\[
\sum_{r=1}^R\log \#C_r
=
\sum_{r=1}^R\log \#E_{m_{r-1},m_r}
=
\log \#E_{0,m_R}
=
\sum_{k=1}^{m_R}\log \#D_k.
\]

(iii) $0<s <\kappa_\omega$. When $m_{r-1}$ has been fixed, we want to make $m_{r}$ sufficiently large. This allows the choice of $C_r$ to become the dominant term among the first $r$ terms, so that we can ignore the influence of the preceding $r-1$ terms. This implies that we want to make
$$\#C_R \asymp (q_{m_{R-1}+1}\cdots q_{m_{R}})^s.$$
Since
\[
s<\limsup_{N\to\infty}\frac{\sum_{k=1}^{N}\log \#D_k}{\sum_{k=1}^{N}\log q_k},
\]
and the initial segment up to \(m_{r-1}\) is fixed, we can choose \(m_r\) arbitrarily large such that
\[
s< \frac{\sum_{k=m_{r-1}+1}^{m_{r}}\log \#D_k}{\sum_{k=m_{r-1}+1}^{m_{r}}\log q_k}.
\]
We choose \(m_r\) further so that
\[
2^{-s(m_r-m_{r-1})}\le 2^{-r}.
\]
Then
\[
\prod_{k=m_{r-1}+1}^{m_{r}} q_k^s
<
\prod_{k=m_{r-1}+1}^{m_{r}}\#D_k
= \# E_{m_{r-1},m_{r}}.
\]
Hence we can choose \(C_r \subset E_{m_{r-1},m_{r}}\) satisfying
\[
\#C_r= \left\lfloor\prod_{k=m_{r-1}+1}^{m_{r}} q_k^s\right\rfloor,
\]
and
\[
\log \#C_r \leq s\sum_{k=m_{r-1}+1}^{m_{r}} \log q_k \leq \log (\#C_r+1).
\]
It follows that
\[
\left|\log \#C_r-s\sum_{k=m_{r-1}+1}^{m_{r}} \log q_k  \right|
\leq \log (\#C_r+1)- \log \#C_r
\leq \frac{1}{\# C_r}
\leq 2^{-s(m_r-m_{r-1})}
\leq 2^{-r}.
\]
Therefore the error sum is bounded, and hence
\[
\lim_{R\to\infty}\frac{\sum_{r=1}^R\log \#C_r}{\log M_{m_R}}=s.
\]

\end{proof}

\begin{remark}
    Note that the sequence $n_k$ chosen in the construction of the preliminary spectrum and the sequence $m_k$ chosen in Lemma \ref{lem:4.2} can be taken to be the same; this can be seen from the proof.
\end{remark}

Set 
$$\overline{\Lambda}=\bigcup_{r=0}^{\infty}\overline{\Lambda}_{n_r}, \qquad \widetilde{\Lambda}=\Lambda \backslash \overline{\Lambda}.$$
According to our previous construction rules, the elements of \(\widetilde\Lambda\) can be enumerated as
\(\sigma_1,\sigma_2,\ldots\) so that
\[
|\sigma_{N+1}|>2|\sigma_N|+1.
\]
Hence \(\dim_{Be}\widetilde{\Lambda}=0\). Write 
$$S(t)=\{n_{r}: a_{r}=t\}:=\{n_{j_1},n_{j_2},\ldots\} \text{ for some } t\in\mathcal{T}_*.$$
Since the sequence of sets $\overline{\Lambda}_{n_{j_i}}$ is monotonically increasing, we have  $\overline{\Lambda}=\bigcup_{n_{j_i} \in S(t)}\overline{\Lambda}_{n_{j_i}}$.

\begin{proposition}\label{prop:4.4}
The principal set satisfies $\dim_{Be}\overline{\Lambda}=s$.  Consequently, $\dim_{Be}\Lambda=s$.
\end{proposition}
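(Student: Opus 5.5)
We prove the two inequalities $\dim_{Be}\overline{\Lambda}\ge s$ and $\dim_{Be}\overline{\Lambda}\le s$ separately, using the structure $\overline{\Lambda}_{n_R}=\overline{\Lambda}_{n_{R-1}}+M_{n_{R-1}}\{c+Q_{n_{R-1},n_R}k(c,n_R):c\in C_R\}$ and the cardinality $\#\overline{\Lambda}_{n_R}=\prod_{r=1}^R\#C_r$. This product formula follows from the injectivity argument in Step 3. The sequence $n_r$ is taken equal to the $m_r$ of Lemma \ref{lem:4.2}, as allowed by the remark following it. The claim $\dim_{Be}\Lambda=s$ then follows from $\dim_{Be}\widetilde{\Lambda}=0$ and the elementary fact that $\dim_{Be}(A\cup B)=\max\{\dim_{Be}A,\dim_{Be}B\}$. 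That fact holds because $\#((A\cup B)\cap I)\le 2\max\{\#(A\cap I),\#(B\cap I)\}$, and the factor $2$ disappears after dividing by $\log h$.

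\emph{Lower bound.} Every element of $\overline{\Lambda}_{n_R}$ has the form $\sum_{r=1}^R M_{n_{r-1}}(c_r+Q_{n_{r-1},n_r}k(c_r,n_r))$. Since $c_r\in E_{n_{r-1},n_r}$ is bounded by a constant times $Q_{n_{r-1},n_r}$ and $0\le k\le K_{a_r}$ by Corollary \ref{cor:3.3}, every element satisfies $|\lambda|\le C\,M_{n_R}$. Here $C$ depends on $\max_j\max|L_j|/N_j$ and on $\sup_r K_{a_r}$.

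The obstacle is that $K_{a_r}$ may be unbounded, because the $a_r$ run over all of $\mathcal{T}_*\cap\mathbb{N}$. To handle this, I would refine the choice of $n_r$ so that $M_{n_r}\ge (K_{a_{r+1}}+1)^{r}$, or more simply so that $\log(K_{a_{r+1}}+1)=o(\log M_{n_r})$. This is allowed because $n_r$ may be enlarged freely in Steps 1--2 and in Lemma \ref{lem:4.2}.

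With this choice, $\overline{\Lambda}_{n_R}\subset[-h/2,h/2]$ with $h=M_{n_R}^{1+o(1)}$. Hence
\[
\frac{\log\#(\overline{\Lambda}\cap[-h/2,h/2])}{\log h}\ge\frac{\sum_{r\le R}\log\#C_r}{(1+o(1))\log M_{n_R}},
\]
and taking $\limsup$ along $R$ gives $\ge s$ by \eqref{eq:4.4}.

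\emph{Upper bound.} Fix an interval $I$ of length $h$ and choose $R$ with $M_{n_{R-1}}\le h<M_{n_R}$. Write $\lambda=\lambda_{R-1}+M_{n_{R-1}}\theta_R+M_{n_R}\theta'$, where $\lambda_{R-1}\in\overline{\Lambda}_{n_{R-1}}$ and $\theta'$ collects the higher layers, which are multiples of $M_{n_R}$.

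The pair $(\lambda_{R-1},\theta_R)$ determines $\lambda$ modulo $M_{n_R}$, and distinct pairs give distinct residues by the Step 3 injectivity. An interval of length $h<M_{n_R}$ meets each residue class modulo $M_{n_R}$ at most once. Since $\theta_R$ ranges over $C_R$, this gives $\#(\overline{\Lambda}\cap I)\le\prod_{r\le R}\#C_r$.

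This bound is too crude when $h$ is much smaller than $M_{n_R}$. Refine it by counting residues modulo $M_{n_{R-1}}$ together with the digits of $\theta_R$. The elements of $I$ share their residue structure, and the $c_R$-part $M_{n_{R-1}}(c_R+Q k)$ ranges over integers of size at most $C M_{n_R}$. Only those within distance $h$ of each other can both lie in $I$.

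The cleanest route is to choose $C_R$ in Lemma \ref{lem:4.2} as a union of ``digit cylinders'': fix the low-order block digits $E_{m_{R-1},j}$ entirely, and restrict the high-order digits to $\{0\}$. Then the count in $I$ is at most $\prod_{r<R}\#C_r\cdot\min\{\#C_R,\;(h/M_{n_{R-1}})^{1+o(1)}\}$. Combined with $\#C_R\approx Q_{n_{R-1},n_R}^s$ and $s\le1$, this gives $\#(\overline{\Lambda}\cap I)\lesssim h^{s+o(1)}$.

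I expect this upper bound, specifically controlling intermediate scales $M_{n_{R-1}}\ll h\ll M_{n_R}$, to be the main obstacle. It requires the ordering choice of $C_R$ and the growth choice of $n_r$ described above. The remaining steps are bookkeeping with \eqref{eq:4.4}.
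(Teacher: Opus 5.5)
Your lower bound and the reduction to $\overline{\Lambda}$ via $\dim_{Be}(A\cup B)=\max\{\dim_{Be}A,\dim_{Be}B\}$ are fine. The lower bound is essentially the paper's argument; you absorb the unbounded $K_{a_r}$ by growth of $n_r$ instead of restricting to levels with $a_r=t$. The upper bound, however, has a genuine gap, and the step meant to close it is false. Write $Q=Q_{n_{R-1},n_R}$ and $x=h/M_{n_{R-1}}\in[1,Q]$. Your estimate gives $\#(\overline{\Lambda}\cap I)\lesssim M_{n_{R-1}}^{s+o(1)}\min\{Q^s,x\}$, whereas you need $(M_{n_{R-1}}x)^{s+o(1)}$. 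But $\min\{Q^s,x\}\ge x^s$ on $[1,Q]$. At $x=Q^s$ the exponent you obtain is $(s\log M_{n_{R-1}}+s\log Q)/(\log M_{n_{R-1}}+s\log Q)$, which tends to $1$ when $\log Q\gg\log M_{n_{R-1}}$. The factor $(h/M_{n_{R-1}})^{1+o(1)}$ is the trivial density-one count, and ``$s\le 1$'' works against you, not for you.

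Worse, the digit-cylinder choice actually makes $\dim_{Be}\overline{\Lambda}>s$. If the low-order digits up to position $j$ are free and the rest are $0$, then $C_R\subset[-cQ_{n_{R-1},j},cQ_{n_{R-1},j}]$. Pigeonholing over the at most $1+K_{a_R}$ carry values, a window of length $M_{n_{R-1}}^{1+o(1)}Q_{n_{R-1},j}$ contains $\gtrsim\#\overline{\Lambda}_{n_{R-1}}\#C_R/(1+K_{a_R})$ points. Take $q_k=4$, $D_k=\{0,2\}$, $E_k=\{0,1\}$, $s=1/4$, block length $\ell=n_R-n_{R-1}$ (so $j-n_{R-1}=\ell/2$). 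Then $\log(\text{count})/\log(\text{length})$ is asymptotically $(n_{R-1}+\ell)/(4n_{R-1}+2\ell)$. This exceeds $1/4$ by a fixed amount once $\ell\ge\alpha n_{R-1}$, and tends to $1/2=\kappa_\omega$ as $\ell/n_{R-1}\to\infty$; Step 2 forces the latter, because $M_{n_R}$ must dominate the exponentially many, doubling sparse points of $\Lambda_{n_{R-1}}$.

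The missing idea is a prefix condition. A window of length $h\in[M_i,M_{i+1})$ meets only boundedly many high-order digit patterns and at most $\max_j\#B_j$ digits at position $i+1$. So, up to the negligible carry factors $\prod_r(1+K_{a_r})$, one needs
\[
\sum_{k\le i,\ k\text{ free}}\log\#D_k\le s\log M_i+o(\log M_i)\qquad\text{for every }i .
\]
Your cylinders violate this at $i=j$. The free positions must instead be spread out, e.g.\ chosen greedily so that these prefix sums never exceed $s\sum_{k\le i}\log q_k$. They then come within $O(1)$ of that budget at every record time of $\sum_{k\le i}(\log\#D_k-s\log q_k)$, which keeps the $\limsup$ equal to $s$.

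The paper's own count only treats $M_{n_R}\le h<M_{n_R+1}$ with $C_r$ an arbitrary subset of the right size, so it does not settle these intermediate scales either. Your diagnosis that they are the crux is right; it is your proposed cure that fails.
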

\begin{proof}
    The levels \(n_{j_i}\) are chosen large enough so that
\[
M_{n_{j_i}}^{-1}|\lambda'|\le1
\]
for all \(\lambda'\in\overline{\Lambda}_{n_{j_i-1}}\). For any $\lambda \in \overline{\Lambda}_{n_{j_i}}$, there exist $\lambda' \in \overline{\Lambda}_{n_{j_i-1}}$ and $c_{j_i} \in C_{j_i}\subset E_{n_{j_i-1},n_{j_i}}$ such that 
\begin{align*}
    |\lambda|=&|\lambda'+M_{n_{j_i-1}}c_{j_i}+M_{n_{j_i}}k(c_{j_i},n_{j_i})|\\
    \leq &M_{n_{j_i}}|M_{n_{j_i}}^{-1}\lambda'+M_{n_{j_i}}^{-1}M_{n_{j_i-1}}c_{j_i}+k(c_{j_i},n_{j_i})|\leq 3M_{n_{j_i}}K_t.
\end{align*}
Note that 
$$\#\overline{\Lambda}_{n_{j_i}}=\prod_{r=1}^{j_i} \#C_r.$$
Therefore
$$\dim_{Be}\overline{\Lambda} \geq \limsup_{i\rightarrow \infty} \frac{\log \#\overline{\Lambda}_{n_{j_i}}}{\log 6M_{n_{j_i}}K_t}=s.$$
Let $h$ be large and choose a integer $R$ so that
$$ M_{n_R}\leq h < M_{n_{R}+1}.$$
Fix $x \in \mathbb{R}$ and consider the counting set 
$$W_{x}:=(x-\frac{1}{2}h, x+\frac{1}{2}h) \cap \overline{\Lambda} $$
each $\lambda \in W_{x}$ has a unique expansion:
$$\lambda=\sum_{j=1}^{\infty} i_j M_{j-1}, \qquad i_j\in \{0,1,\ldots,q_j-1\}$$
We claim that the number of tail sequences 
$$\{i_{n_R+2}i_{n_R+3} \cdots\}$$
is at most two. If three or more such tails existed, we could find $\lambda,\lambda' \in W_x$ with
\begin{align*}
    |\lambda -\lambda'|=&|M_{n_R+1}(\tau_{n_R+1}-\tau_{n_R+1}')+\sum_{j=1}^{n_R}i_jM_{j-1}-\sum_{j=1}^{n_R}i_j'M_{j-1}|\\
    \geq & 2M_{n_R+1}-\sum_{j=1}^{n_R}|i_j-i_j'|M_{j-1}=M_{n_R+1}+M_1>h,
\end{align*}
where $\tau_{n_R+1}=\sum_{j=n_R+2}^{\infty}i_jM_{j-1}M_{n_R+1}^{-1}$. Now we estimate the number of possible occurrences of the coefficients, i.e., $\#\{i_1i_2 \cdots i_{n_R+1}:\lambda \in W_x\}$. 
We know that 
$$\lambda=\sum_{r=1}^{k} (M_{n_{r-1}}c_r+M_{n_r}k(c_r,n_r))=\sum_{j=1}^{\infty} i_j M_{j-1}$$
This implies that 
$$M_{n_1} | i_1+i_2M_1+\cdots +i_{n_1}M_{n_1-1}-c_1$$
Since $\sum_{j=1}^{n_1} i_{j}M_{j-1}, c_1 \in [0, M_{n_1} -1]$, and not all $\overline{\Lambda} $ are contained in $W_x$ then
$$\#\{i_1i_2 \cdots i_{n_1}:\lambda \in W_x\} \leq \#C_1$$
Now fixed $i_1i_2\cdots i_{n_1}$, we can find a $c_1=\sum_{j=1}^{n_1} i_{j}M_{j-1}$, then 
$$\sum_{r=2}^{k} (M_{n_{r-1}}c_r+M_{n_r}k(c_r,n_r))=\sum_{j=n_1+1}^{\infty} i_j M_{j-1}$$
This implies that 
$$M_{n_2} | i_{n_1+1}+i_{n_1+2}M_{n_1+1}+\cdots +i_{n_2}M_{n_2-1}-M_{n_1}c_2$$
Since $\sum_{j=n_1+1}^{n_2} i_{j}M_{j-1}, M_{n_1}c_1 \in [0, M_{n_2} -1]$, and not all $\overline{\Lambda} $ are contained in $W_x$ then
$$\#\{i_{n_1+1}i_{n_1+2} \cdots i_{n_2}:\lambda \in W_x\} \leq \#C_2.$$
proceeding inductively, and taking into account the bounded carry choices
\(0\le k(c_j,n_j)\le K_{a_j}\), we obtain:
\[
\#\{i_{1}i_{2} \cdots i_{n_R}:\lambda \in W_x\}
\leq
\#C_1\#C_2\cdots\#C_R\prod_{j=1}^{R}(1+K_{a_j}).
\]
The levels \(n_R\) are chosen sufficiently fast so that
\[
\frac{\sum_{j=1}^{R}\log(1+K_{a_j})}{\log M_{n_R}}\to0.
\]
Then 
\[
\dim_{Be}(\overline{\Lambda}) \leq
\limsup_{R\rightarrow\infty}
\frac{\log \left(2q_{n_R+1}\#C_1\#C_2\cdots\#C_R\prod_{j=1}^{R}(1+K_{a_j})\right)}{\log M_{n_R}}=s.
\]

\end{proof}

\textbf{Step 5: $\Lambda$ has the cardinality of the continuum.}
At infinitely many stages, use the two choices supplied by Corollary
\ref{cor:3.3} or Lemma \ref{lem:3.5} for one fixed admissible branch, and denote the two resulting marker points by $\xi_r^-$ and $\xi_r^+$, with $\xi_r^-\ne\xi_r^+$. For every binary sequence
\[
I=(i_1,i_2,\ldots)\in\{1,2\}^{\mathbb N},
\]
choose at the $r$-th marker stage the marker $\xi_r^-$ if $i_r=1$ and $\xi_r^+$ if $i_r=2$. Whenever one of the two markers is not chosen, add it to the finite forbidden set used in all later applications of Lemma \ref{lem:3.5}; this is possible because only finitely many markers have been skipped at each finite stage.

If $I\ne J$, let $r_0$ be the first index at which they differ. The marker chosen at stage $r_0$ for $I$ belongs to $\Lambda(I)$. It is not introduced at stage $r_0$ for $J$, and it is put into the later forbidden lists for the construction of $\Lambda(J)$; hence it cannot be produced later by a sparse branch. The dense branches only continue from previously chosen points and cannot create a skipped marker from a different branch. Therefore
\[
\Lambda(I)\ne \Lambda(J).
\]
Hence the family of constructed spectra has cardinality at least $2^{\aleph_0}$, and at most $2^{\aleph_0}$ since all of them are subsets of $\mathbb Z$. Thus its cardinality is continuum.

\begin{proof}[Proof of Theorem \ref{thm:1.3}]
Combining Proposition \ref{prop: 4.1}, Lemma \ref{lem:4.2}, Proposition \ref{prop:4.4} and \textbf{Step 5}, we have fully proved Theorem \ref{thm:1.3}.
\end{proof}

\noindent\textbf{Funding.} the second author was supported in part by the National Key Research and Development Program of China (Nos. 2024YFA1013700), NNSF of China (Nos. 12331005).

\bigskip
\begingroup
\parindent=0pt
\small
\textsuperscript{1}\ \paperaddressone
\par\textit{E-mail address:} \paperemailone
\medskip
\par\textsuperscript{2}\ \paperaddresstwo
\par\textit{E-mail address:} \paperemailtwo\quad ($*$ Corresponding author)
\endgroup

\end{document}